\documentclass[11pt]{amsart}
\usepackage{amsmath,amssymb,latexsym,soul,cite,mathrsfs}

\usepackage{color,enumitem,graphicx}
\usepackage[colorlinks=true,urlcolor=blue,
citecolor=red,linkcolor=blue,linktocpage,pdfpagelabels,
bookmarksnumbered,bookmarksopen]{hyperref}
\usepackage[english]{babel}

\usepackage[left=2.9cm,right=2.9cm,top=2.8cm,bottom=2.8cm]{geometry}
\usepackage[hyperpageref]{backref}

\usepackage[colorinlistoftodos]{todonotes}
\makeatletter
\providecommand\@dotsep{5}
\def\listtodoname{List of Todos}
\def\listoftodos{\@starttoc{tdo}\listtodoname}
\makeatother

\numberwithin{equation}{section}
%\pagestyle{myheadings}
% \markboth{}{} \pretolerance=10000
%\def\lb{\lambda}
%\def\var{\varepsilon}
%\def\pil{\left<}
%\def\pir{\right>}

%\def\nd{\noindent}
%\def\thend{\rule{3mm}{3mm}}
%\def\Re{\mathbb{R}}

\def\R {{\rm I}\hskip -0.85mm{\rm R}}
\def\N {{\rm I}\hskip -0.85mm{\rm N}}

\newtheorem{theorem}{Theorem}[section]
\newtheorem{proposition}[theorem]{Proposition}
\newtheorem{lemma}[theorem]{Lemma}
\newtheorem{corollary}[theorem]{Corollary}
\newtheorem{definition}[theorem]{Definition}

\newtheorem{remark}{Remark}

\title[Existence and uniqueness of solution for a nonhomogeneous nonlocal problem]
{Existence and uniqueness of solution for a nonhomogeneous nonlocal problem}

\author[C. S. Z. Redwan]{Camil S. Z. Redwan}
\author[J. R. Santos Jr.]{Jo\~ao R. Santos J\'unior}
\author[A. Su\'arez]{Antonio Su\'arez}

\address[C. S. Z. Redwan]{\newline\indent Faculdade de Matem\'atica
\newline\indent 
Instituto de Ci\^{e}ncias Exatas e Naturais
\newline\indent 
Universidade Federal do Par\'a
\newline\indent
Avenida Augusto corr\^{e}a 01, 66075-110, Bel\'em, PA, Brazil}
\email{\href{mailto: camilredwan@gmail.com}{camilredwan@gmail.com}}

\address[J. R. Santos Jr.]{\newline\indent Faculdade de Matem\'atica
\newline\indent 
Instituto de Ci\^{e}ncias Exatas e Naturais
\newline\indent 
Universidade Federal do Par\'a
\newline\indent
Avenida Augusto corr\^{e}a 01, 66075-110, Bel\'em, PA, Brazil}
\email{\href{mailto: joaojunior@ufpa.br }{joaojunior@ufpa.br}}

\address[A. Su\'arez]{\newline\indent Departamento de Ecuaciones Diferenciales y An{\'a}lisis Num{\'e}rico
\newline\indent 
Facultad de Matem{\'a}ticas
\newline\indent 
Universidad de Sevilla
\newline\indent
C/. Tarfia s/n, 41012, Sevilla, Spain.}
\email{\href{mailto: suarez@us.es}{suarez@us.es}}

\thanks{Camil S. Z. Redwan was partially supported by Fapespa, Brazil. Jo\~ao R. Santos J\'unior was partially supported by CNPq-Proc. 302698/2015-9, Brazil. Antonio Su\'arez has been partially supported by MTM2015-69875-P (MINECO/FEDER, UE) and  CNPq-Proc. 400426/2013-7}
\subjclass[2000]{ 35J15, 35J25, 35Q74.}
%\date{\today}
\keywords{ Kirchhoff type equation, sublinear problem, topological method.}

%\author{
%{\bf\large Giovany M. Figueiredo}\hspace{2mm}
%{\bf\large}\vspace{1mm}\\
%{\it\small Universidade Federal do Par\'a, Faculdade de Matem\'atica}\\
%{\it\small 66075-110, Bel\'em - PA, Brazil }\\
%{\it\small e-mail: giovany@ufpa.br}
%\\ \\
%{\bf\large Gaetano Siciliano}\hspace{2mm}
%{\bf\large}\vspace{1mm}\\
%{\it\small Universidade de S\~ao Paulo - USP, Departamento de  Matem\'atica e Estat\'istica }\\
%{\it\small 05508-090, S\~ao Paulo - SP, Brazil }\\
%{\it\small e-mail: sicilian@ime.usp.br} }
%
%\title{On a  multiplicity result via Morse theory for a problem with fractional Laplace in $\R^{N}$
%\thanks{Giovany M. Figueiredo was partially
%supported by  CNPq/Brazil . Gaetano Siciliano  was partially supported by
%Fapesp and CNPq, Brazil. }}
%\date{}

%\maketitle{}

\pretolerance10000

\begin{document}

\maketitle
\begin{abstract}
In this paper we investigate a class of elliptic problems involving a nonlocal Kirchhoff type operator with variable coefficients and data changing its sign. Under appropriated conditions on the coefficients, we have shown existence and uniqueness of solution.
\end{abstract}
\maketitle

%------------------------------------------------------------------------------
\section{Introduction}

%------------------------------------------------------------------------------

In this paper we are concerned with uniqueness of nontrivial classic solution to the following class of nonlocal elliptic equations

\begin{equation}\label{P}\tag{P}
\left \{ \begin{array}{ll}
-\left(a(x)+b(x)\int_{\Omega}|\nabla u|^{2}dx\right)\Delta u=h(x) & \mbox{in $\Omega$,}\\
u=0 & \mbox{on $\partial\Omega$,}
\end{array}\right.
\end{equation}
where $\Omega\subset\R^{N}$, $N\geq 2$, is a bounded domain with smooth boundary, $a, b\in C^{0, \gamma}(\overline{\Omega})$, $\gamma\in (0, 1)$, are positive functions with $a(x)\geq a_{0}>0$, $b(x)\geq b_{0}>0$ and $h\in  C^{0, \gamma}(\overline{\Omega})$ is given.

\medskip

When functions $a, b$ are positive constants, problem \eqref{P} is the $N$-dimensional stationary version of a hyperbolic problem proposed in \cite{K} to model small transversal vibrations of an elastic string with fixed ends which is composed by a homogeneous material. Such equation is a more realistic model than that provided by the classic D'Alembert's wave equation because takes account the changing in the length of the string during the vibrations. The hyperbolic Kirchhoff problem (with $a, b$ constants) began receiving special attention mainly after that in \cite{L} the author used an approach of functional analysis to attack it. 

At least in our knowledge, the first work in studying uniqueness questions to problem \eqref{P} with $a, b$ constants was \cite{ACM}. It is an immediate consequence of Theorem 1 in \cite{ACM} that if $h$ is a H\"older continuous nonnegative (nonzero) function then problem \eqref{P}, with $a, b$ constants, has a unique positive solution. In \cite{ACM} functions $h$ sign changing are not considered. In the case that $a, b$ are not constant, problem \eqref{P} is yet more relevant in an applications point of a view because its unidimensional version models small transversal vibrations of an elastic string composed by non-homogeneous materials (see \cite{FMSS}, section 2). In \cite{FMSS} (see Theorem 1) the authors proved that for each $h\in L^{\infty}(\Omega)$ ($h\not\equiv 0$) given, problem (\ref{P}) admits at least a nontrivial solution. Moreover, same article tells us that if $h$ has defined sign ($h\leq 0$ or $h\geq 0$) such a solution is unique. Unfortunately, since their approach was based in a monotonicity argument which does not work when $h$ is a sign changing function, they were not able to say anything about uniqueness in this case. Indeed, at least in our knowledge, actually the uniqueness of solution to problem \eqref{P} in the general case is an open problem. 

At this article we have obtained sufficient conditions on the quotient $a/b$ to ensure uniqueness of solution when function $h$, given, changes its sign. The main results of this paper are as follows

\begin{theorem}\label{Main1}
If there exists $\theta>0$ such that $a/b=\theta$ in $\Omega$, then for each $h\in C^{0, \gamma}(\overline{\Omega})$ problem \eqref{P} has a unique solution.
\end{theorem}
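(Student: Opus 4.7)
The key observation is that the hypothesis $a(x)=\theta b(x)$ allows us to factor $b(x)$ out of the nonlocal coefficient. Indeed, $(P)$ becomes
\[
-b(x)\Bigl(\theta+\int_\Omega|\nabla u|^2\,dx\Bigr)\Delta u=h(x)\quad\text{in }\Omega,\qquad u=0\text{ on }\partial\Omega,
\]
so if we let $t:=\theta+\|\nabla u\|_2^2\geq\theta$, then $u$ must satisfy the linear Poisson problem $-\Delta u=h/(b\,t)$. The plan is to reduce $(P)$ to a scalar equation for the unknown parameter $t$.

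\textbf{Step 1: auxiliary linear problem.} Since $b\in C^{0,\gamma}(\overline{\Omega})$ with $b\geq b_0>0$, the function $h/b$ belongs to $C^{0,\gamma}(\overline{\Omega})$. By classical Schauder theory there exists a unique classical solution $w\in C^{2,\gamma}(\overline{\Omega})\cap C_0(\overline{\Omega})$ of
\[
-\Delta w=\frac{h}{b}\quad\text{in }\Omega,\qquad w=0\text{ on }\partial\Omega.
\]
Set $A:=\|\nabla w\|_2^2\geq 0$; by uniqueness of $w$ we have $A=0$ iff $h\equiv 0$.

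\textbf{Step 2: reduction to a scalar equation.} Any classical solution $u$ of $(P)$ satisfies $-\Delta(t u)=h/b$ with $t=\theta+\|\nabla u\|_2^2$, so by the uniqueness in Step 1, $u=w/t$. Computing $\|\nabla u\|_2^2=A/t^2$ and plugging into the definition of $t$ yields the cubic
\[
F(t):=t^2(t-\theta)=A,\qquad t\geq\theta.
\]
Conversely, any $t\geq\theta$ with $F(t)=A$ produces, via $u:=w/t$, a solution of $(P)$: a direct substitution shows $(a+b\|\nabla u\|_2^2)(-\Delta u)=b(\theta+A/t^2)\cdot h/(bt)=h$ precisely when $F(t)=A$.

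\textbf{Step 3: solving the scalar equation.} On $[\theta,\infty)$ one has $F'(t)=t(3t-2\theta)\geq \theta^2>0$, so $F$ is strictly increasing, $F(\theta)=0$, and $F(t)\to\infty$ as $t\to\infty$. Hence $F(t)=A$ has exactly one root $t_*\in[\theta,\infty)$: if $h\equiv 0$, then $A=0$ and $t_*=\theta$, giving the (unique) trivial solution $u\equiv 0$; if $h\not\equiv 0$, then $A>0$ and $t_*>\theta$ is uniquely determined, giving the unique nontrivial solution $u=w/t_*$.

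\textbf{Main obstacle.} There is no serious analytic obstacle once the substitution is made; the whole argument hinges on recognizing that the proportionality $a=\theta b$ turns the Kirchhoff nonlocality into a single scalar unknown $t$, after which everything reduces to the elementary monotonicity of the cubic $F$. The delicate point is verifying that the constraint $t\geq\theta$ is precisely what makes the cubic injective, since on the full real line $F$ is not monotone.
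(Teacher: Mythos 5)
Your proof is correct, but it takes a genuinely different route from the paper. The paper obtains Theorem \ref{Main1} as a special case of its general machinery: it defines $\Psi(u)=\bigl(a+b|\nabla u|_{2}^{2}\bigr)\Delta u$ on $X$, proves that $\Psi$ is proper (Proposition \ref{proper}) and locally invertible at every $u$ for which $\int_{\Omega}b\,u\,\Delta u/M\,dx\neq 1/2$ (Proposition \ref{Prop}), and then observes that when $a/b=\theta$ this integral equals $-|\nabla u|_{2}^{2}/(\theta+|\nabla u|_{2}^{2})\leq 0$, so the Global Inverse Theorem yields that $\Psi:X\to Y$ is a homeomorphism. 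You instead exploit the proportionality $a=\theta b$ to collapse the nonlocal term into a single scalar: any solution must be $u=w/t$ with $-\Delta w=h/b$ and $t=\theta+|\nabla u|_{2}^{2}$, and $t$ is determined by the cubic $t^{2}(t-\theta)=|\nabla w|_{2}^{2}$, which is strictly increasing on $[\theta,\infty)$ since $F'(t)=t(3t-2\theta)\geq\theta^{2}>0$ there. Your argument is more elementary and constructive (it gives an explicit solution formula and needs only Schauder theory plus uniqueness for the Dirichlet Laplacian, used to identify $tu$ with $w$), and it is essentially the classical scaling device of Alves--Corr\^ea--Ma extended from constant coefficients to $a=\theta b$. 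What the paper's heavier framework buys is uniformity: the inverse-function-theorem approach is the one that survives when $a/b$ is not constant (Theorem \ref{Main2}), where no such one-parameter reduction exists. Both proofs are complete.
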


\begin{theorem}\label{Main2}
Let $a, b\in C^{2, \gamma}(\overline{\Omega})$, $h\in C^{0, \gamma}(\overline{\Omega})$ is sign changing and suppose $c=a/b$ not constant. 
\begin{itemize}
\item[$(i)$] If $\Delta c\geq 2|\nabla c|^{2}/c \ \mbox{in} \ \Omega$, then, for each $h\in C^{0, \gamma}(\overline{\Omega})$ given, problem \eqref{P} has a unique nontrivial classic solution.

\item[$(ii)$] If $\Delta c< 2|\nabla c|^{2}/c \ \mbox{in some open} \ \Omega_{0}\subset \Omega$ then, for each $h\in C^{0, \gamma}(\overline{\Omega})$ given, problem \eqref{P} has a unique nontrivial classic solution, provided that
$$
\frac{|\nabla c|_{\infty}c_{M}}{\sqrt{\lambda_{1}}c_{L}^{2}}\leq 3/2,
$$
where $\lambda_{1}$ is the first eigenvalue of laplacian operator with Dirichlet boundary condition, $c_{L}=\min_{x\in\overline{\Omega}}c(x)$, $c_{M}=\max_{x\in\overline{\Omega}}c(x)$ and $|\nabla c|_{\infty}=\max_{x\in\overline{\Omega}}|\nabla c (x)|$.
\end{itemize}
\end{theorem}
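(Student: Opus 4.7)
The plan is to reduce \eqref{P} to a scalar fixed point problem in the single real parameter $t=\int_\Omega|\nabla u|^2\,dx$. For each $t\geq 0$ let $u_t\in C^{2,\gamma}(\overline\Omega)$ denote the unique classical solution of the \emph{linear} Dirichlet problem
$$-\Delta u=\frac{h(x)}{b(x)(c(x)+t)}\;\;\mbox{in}\;\Omega,\qquad u|_{\partial\Omega}=0,$$
whose existence and regularity follow from Schauder theory, since $b(c+t)\in C^{0,\gamma}(\overline\Omega)$ is bounded below by $b_0c_L>0$. Then $u$ solves \eqref{P} iff $u=u_t$ with $t=\int_\Omega|\nabla u_t|^2\,dx$; equivalently, iff $t$ is a zero of
$$G(t):=\int_\Omega|\nabla u_t|^2\,dx-t.$$
Since $h$ is sign changing we have $u_0\not\equiv 0$, hence $G(0)>0$; standard a priori estimates for the linear equation yield $\int_\Omega|\nabla u_t|^2\,dx\to 0$ as $t\to\infty$, so $G(t)\to-\infty$. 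By continuity of $t\mapsto u_t$ in $C^{2,\gamma}$, $G$ is continuous, so a zero exists, which gives existence of a nontrivial classical solution. The theorem thus reduces to proving that this zero is unique, for which it suffices to show $G'(t)<0$ on $[0,\infty)$.

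Differentiating the linear PDE in $t$, the function $v_t:=\partial_t u_t$ satisfies $-\Delta v_t=-h/[b(c+t)^2]=\Delta u_t/(c+t)$, the second identity following from the equation for $u_t$. Integrating by parts in $G'(t)=2\int_\Omega\nabla u_t\cdot\nabla v_t\,dx-1$ and using $\nabla(u_t/(c+t))=\nabla u_t/(c+t)-u_t\nabla c/(c+t)^2$, one obtains
$$G'(t)=-2\int_\Omega\frac{|\nabla u_t|^2}{c+t}\,dx+2\int_\Omega\frac{u_t\,\nabla c\cdot\nabla u_t}{(c+t)^2}\,dx-1.$$
Denote these two integrals by $A$ and $I$. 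Writing $u_t\nabla u_t=\tfrac12\nabla u_t^2$ and integrating by parts once more recasts $I$ as
$$I=\tfrac12\int_\Omega u_t^2\,\Delta\!\left(\frac{1}{c+t}\right)dx=\tfrac12\int_\Omega u_t^2\left[-\frac{\Delta c}{(c+t)^2}+\frac{2|\nabla c|^2}{(c+t)^3}\right]dx,$$
in which the exact combination of $\Delta c$ and $|\nabla c|^2/c$ appearing in the hypotheses of (i) and (ii) features explicitly.

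For part (i), since $c+t\geq c$ the hypothesis $\Delta c\geq 2|\nabla c|^2/c$ implies $\Delta c\geq 2|\nabla c|^2/(c+t)$, so the bracket is pointwise $\leq 0$ and $I\leq 0$. Hence $G'(t)\leq -2A-1<0$ for every $t\geq 0$ and uniqueness follows at once. In part (ii) the bracket may change sign, so one controls $|I|$ directly. Starting from $|I|\leq|\nabla c|_\infty\int_\Omega|u_t||\nabla u_t|/(c+t)^2\,dx$, using $(c+t)^2\geq(c_L+t)^2$, applying Cauchy--Schwarz and the Poincar\'e inequality $\|u_t\|_2\leq\|\nabla u_t\|_2/\sqrt{\lambda_1}$ to bound $\int_\Omega|u_t||\nabla u_t|\,dx\leq\int_\Omega|\nabla u_t|^2\,dx/\sqrt{\lambda_1}$, and finally replacing $\int_\Omega|\nabla u_t|^2$ by $(c_M+t)A$ through the pointwise inequality $c+t\leq c_M+t$, one arrives at
$$|I|\leq\frac{|\nabla c|_\infty(c_M+t)}{\sqrt{\lambda_1}(c_L+t)^2}\,A.$$
The ratio $(c_M+t)/(c_L+t)^2$ is monotone decreasing for $t\geq 0$, with supremum $c_M/c_L^2$ attained at $t=0$. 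Plugging this into $G'(t)=-2A+2I-1$ and invoking the hypothesis $|\nabla c|_\infty c_M/(\sqrt{\lambda_1}c_L^2)\leq 3/2$, together with the slack provided by the additive $-1$ coming from the $-t$ in $G(t)$, then yields $G'(t)<0$ and completes the proof.

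The principal difficulty is the sharpness of the estimate in (ii): the weights $1/(c+t)^k$ for $k=1,2,3$ have to be paired in the Cauchy--Schwarz/Poincar\'e chain so that the extremal ratio $(c_M+t)/(c_L+t)^2$ appears naturally; one must verify its monotonicity in $t$ in order to pin the extremum at $t=0$; and one has to use the $-1$ in $G'(t)$ to soak up the gap between the stated constant $3/2$ and what a purely algebraic absorption of $2I$ into $-2A$ would otherwise demand.
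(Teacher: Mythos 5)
Your scalar reduction is a genuinely different route from the paper's: the paper works with the map $\Psi(u)=M(x,|\nabla u|_2^2)\Delta u$ from $X$ to $Y$, proves it proper, characterizes local invertibility of $\Psi'(u)$ by the condition $\int_\Omega b\,u\Delta u/M\,dx\neq 1/2$, controls that integral through an auxiliary nonlocal eigenvalue problem, and concludes with the Global Inverse Theorem. Your quantity $G'(t)=2\int_\Omega u_t\Delta u_t/(c+t)\,dx-1$ is exactly the same integral in disguise, and your part (i) is complete: there $I\le 0$ gives $G'(t)\le -2A-1<0$ for every $t\ge 0$, so the unique-zero conclusion follows. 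Your approach also has the advantage of only ever needing information along the curve $t\mapsto u_t$ rather than at all of $X$.

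The gap is in the final step of part (ii). Your chain of estimates yields
$G'(t)\le -2A+2|I|-1\le 2A\bigl(\tfrac{|\nabla c|_\infty(c_M+t)}{\sqrt{\lambda_1}(c_L+t)^2}-1\bigr)-1\le 2A\bigl(\tfrac{3}{2}-1\bigr)-1=A-1$,
and $A=\int_\Omega|\nabla u_t|^2/(c+t)\,dx$ is \emph{not} bounded by $1$ for arbitrary $t$: already at $t=0$ one has $A\ge |\nabla u_0|_2^2/c_M$, which is as large as you please when $h$ is large. So the assertion that ``the slack provided by the additive $-1$'' yields $G'(t)<0$ on all of $[0,\infty)$ does not follow; a constant $-1$ cannot absorb an unbounded $A$. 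The repair is to observe that you do not need $G'<0$ everywhere: since $G(0)>0$ and $G(t)\to-\infty$, it suffices that $G'(t)<0$ at every zero of $G$ (after the first zero $G$ is strictly negative, and a second zero would force $G'\ge 0$ there). At a zero one has the self-consistency $t=|\nabla u_t|_2^2$, whence $A\le\frac{1}{c_L+t}\int_\Omega|\nabla u_t|^2\,dx=\frac{t}{c_L+t}<1$ and $G'(t)\le A-1<0$. This coupling $\alpha=|\nabla u|_2^2$ is precisely what the paper exploits in its estimates $\int_\Omega|\nabla u_\alpha|^2/(c+\alpha)\,dx\ge \alpha/(c_M+\alpha)$ and $\int_\Omega|\nabla u|^2/(c+\alpha)\,dx\le\alpha/(c_L+\alpha)$; in your formulation $t$ is decoupled from $|\nabla u_t|_2^2$ except at zeros of $G$, and the argument must be restricted accordingly. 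With that one-line change the proof closes.
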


First theorem above generalizes Theorem 1 in \cite{ACM} because it is true to functions $h$ sign changing or not. The second theorem above complements Theorem 1 in \cite{FMSS}. 

\medskip

The paper is organized as follows. 

In Section \ref{se:prelim} we present some abstracts results, notations and definitions.
In Section \ref{se:eigen} we investigated a nonlocal eigenvalue problem which seems to be closely related with uniqueness questions to problem \eqref{P}.
In Section \ref{se:trivial} we prove Theorems \ref{Main1} and \ref{Main2}. Moreover, an alternative proof of the existence and uniqueness result in \cite{FMSS} is supplied.

%Moreover is supplied and alternative proof (see Corollary \ref{alt}) of (existence and) uniqueness result obtained in \cite{FMSS}, Theorem 1.

\medskip

\section{Preliminaries}\label{se:prelim}

In this section we state some results and fix notations used along of paper.

\begin{definition}
We say that a function $h$ is signed in $\Omega$ if $h\geq 0$ in $\Omega$ or $h\leq 0$ in $\Omega$. 
\end{definition}

\begin{definition}
An application $\Psi:E\to F$ defined in Banach spaces is locally invertible in $u\in E$ if there are open sets $A\ni u$ in $E$ and $B\ni \Psi(u)$ in $F$ such that $\Psi:A\to B$ is a bijection. If $\Psi$ is locally invertible in any point $u\in E$ it is said that $\Psi:E\to F$ is locally invertible.
\end{definition}

\begin{definition}
Let $M, N$ be metric spaces. We say that a map $\Psi:M\to N$ is proper if $\Psi^{-1}(K)=\{u\in M: \Psi(u)\in K\}$ is compact in $M$ for all compact set $K\subset N$.
\end{definition}

Below we enunciate the classic local and global inverse function theorems, whose proofs can be found, for instance, in \cite{AP}.

\begin{theorem}[Local Inverse Theorem]\label{t1}
Let $E, F$ be two Banach spaces. Suppose $\Psi\in C^{1}(E, F)$ and $\Psi'(u):E\to F$ is a isomorphism. Then $\Psi$ is locally invertible at $u$ and its local inverse, $\Psi^{-1}$, is also a $C^{1}$-function.
\end{theorem}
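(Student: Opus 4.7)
The approach is to reduce the statement to Banach's fixed point theorem applied to an auxiliary contraction, and then bootstrap regularity of the local inverse from regularity of $\Psi$. First I would normalize: by translating in the domain and range and by precomposing with the isomorphism $\Psi'(u)^{-1}$ (identifying $F$ with $E$), I can assume $E = F$, $u = 0$, $\Psi(0) = 0$, and $\Psi'(0) = \mathrm{Id}_{E}$. Define the auxiliary map
\[
\phi_{y}(x) := x - \Psi(x) + y,
\]
so that $x$ solves $\Psi(x) = y$ if and only if $x$ is a fixed point of $\phi_{y}$. Since $\phi_{y}'(x) = \mathrm{Id} - \Psi'(x)$ is continuous and vanishes at $0$, I would choose $r > 0$ small enough that $\|\phi_{y}'(x)\| \le 1/2$ on $\overline{B_{r}(0)}$, which makes $\phi_{y}$ a $1/2$-contraction there. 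For $\|y\| \le r/2$ a direct estimate shows $\phi_{y}$ maps $\overline{B_{r}(0)}$ into itself, and Banach's theorem yields a unique fixed point $x = \Psi^{-1}(y)$; this defines the local inverse on $B := B_{r/2}(0)$ taking values in an open set $A \subset \overline{B_{r}(0)}$.

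Next I would establish Lipschitz continuity of $\Psi^{-1}$. The identity $\Psi(\Psi^{-1}(y_{1})) - \Psi(\Psi^{-1}(y_{2})) = y_{1} - y_{2}$, combined with the fact that $x \mapsto x - \Psi(x)$ is $1/2$-Lipschitz on $\overline{B_{r}(0)}$, gives $\|\Psi^{-1}(y_{1}) - \Psi^{-1}(y_{2})\| \le 2\|y_{1} - y_{2}\|$ after a triangle inequality.

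Finally I would upgrade to $C^{1}$ regularity. Since the set of invertible operators in $\mathcal{L}(E,F)$ is open and $\Psi' \in C^{0}$, shrinking $r$ if necessary ensures $\Psi'(x)$ is an isomorphism for every $x \in A$. Fixing $y \in B$, setting $x = \Psi^{-1}(y)$, and letting $k = \Psi^{-1}(y + h) - x$, Lipschitz continuity yields $k \to 0$ as $h \to 0$, and the expansion $h = \Psi(x + k) - \Psi(x) = \Psi'(x) k + o(\|k\|)$ inverts to $k = \Psi'(x)^{-1} h + o(\|h\|)$. Thus $(\Psi^{-1})'(y) = \Psi'(\Psi^{-1}(y))^{-1}$, and this is continuous in $y$ because operator inversion is continuous on the invertible operators and both $\Psi'$ and $\Psi^{-1}$ are continuous; hence $\Psi^{-1} \in C^{1}(B, A)$.

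The main obstacle I anticipate is not any single analytic estimate but the bookkeeping around the simultaneous shrinking of $r$: one must guarantee the contraction bound on $\phi_{y}$, the self-map property of $\phi_{y}$ on $\overline{B_{r}(0)}$, and the openness of invertibility of $\Psi'$ on the chosen ball, all compatible with each other. These are soft statements individually, but fitting them together cleanly and without circular dependency is the delicate piece of the argument.
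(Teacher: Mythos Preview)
Your proof is correct and follows the standard contraction-mapping route to the inverse function theorem in Banach spaces. Note, however, that the paper does not actually prove this statement: Theorem~\ref{t1} is quoted as a classical result, with the proof deferred to \cite{AP} (Ambrosetti--Prodi), and the argument there is essentially the one you outline.
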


\begin{theorem}[Global Inverse Theorem]\label{t2}
Let $M, N$ be two metric spaces and $\Psi\in C(M, N)$ a proper and locally invertible function on all of $M$. Suppose that $M$ is arcwise connected and $N$ is simply connected. Then $\Psi$ is a homeomorphism from $M$ onto $N$.  
\end{theorem}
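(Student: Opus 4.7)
The plan is to show that $\Psi$ behaves as a covering map from $M$ onto $N$, and then use the topological hypotheses — arcwise connectedness of $M$ and simple connectedness of $N$ — to conclude that this cover has a single sheet, i.e.\ that $\Psi$ is bijective. Being also an open map (as a local homeomorphism), $\Psi$ will then be automatically a homeomorphism.

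First I would prove surjectivity by a clopen argument. Local invertibility makes $\Psi$ a local homeomorphism, hence an open map, so $\Psi(M)$ is open in $N$. Properness together with the Hausdorff property of metric spaces implies that $\Psi$ is a closed map (given a closed $C \subset M$ and $y_n = \Psi(x_n) \to y$ with $x_n \in C$, the set $\{y_n\}\cup\{y\}$ is compact, so $\Psi^{-1}(\{y_n\}\cup\{y\})\cap C$ is compact and contains a subsequential limit $x\in C$ with $\Psi(x)=y$). Therefore $\Psi(M)$ is closed in $N$. Since $N$ is simply connected (hence connected) and $M$ is nonempty, $\Psi(M)=N$.

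Second I would verify the covering property. Fix $y \in N$. The fiber $\Psi^{-1}(y)$ is compact by properness and discrete by local invertibility, hence finite: say $\Psi^{-1}(y) = \{x_{1}, \dots, x_{k}\}$. Choose pairwise disjoint open neighborhoods $U_{i}\ni x_{i}$ on which $\Psi$ restricts to a homeomorphism onto an open set. The set $K := M \setminus \bigcup_{i} U_{i}$ is closed, so $\Psi(K)$ is closed in $N$ and does not contain $y$. Then
\[
V := \Big(\bigcap_{i=1}^{k} \Psi(U_{i})\Big) \setminus \Psi(K)
\]
is an open neighborhood of $y$ whose preimage decomposes as the disjoint union $\bigsqcup_{i}(U_{i}\cap \Psi^{-1}(V))$, with each sheet mapped homeomorphically onto $V$. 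Hence $\Psi$ is a covering map.

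Third I would use path lifting to obtain injectivity. Given $y\in N$ and $x_{1}, x_{2}\in \Psi^{-1}(y)$, arcwise connectedness of $M$ gives a path $\alpha:[0,1]\to M$ with $\alpha(0)=x_{1}$, $\alpha(1)=x_{2}$. The composition $\Psi\circ\alpha$ is a loop in $N$ based at $y$, and by simple connectedness it is null-homotopic rel endpoints. Lifting the homotopy to $M$ with initial point $x_{1}$ (homotopy lifting holds for every covering map), the terminal lifted path is the constant lift at $x_{1}$, yet its endpoint is also forced to be $\alpha(1)=x_{2}$; consequently $x_{1}=x_{2}$. Thus $\Psi$ is a continuous, open bijection, hence a homeomorphism. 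The main obstacle I expect is the covering-map step: constructing an evenly covered neighborhood requires properness to furnish the closedness of $\Psi(K)$, and without that one cannot separate the chosen fiber from the rest of $M$ in a way that yields disjoint sheets.
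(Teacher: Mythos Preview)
The paper does not supply a proof of this theorem; it merely states it and refers the reader to Ambrosetti--Prodi \cite{AP}. Your argument is correct and is, in fact, essentially the classical proof one finds in that reference: show $\Psi(M)$ is clopen in $N$ to get surjectivity, upgrade the proper local homeomorphism to a covering map via finiteness of fibers and the evenly-covered neighborhood $V$, and then use unique path/homotopy lifting together with the simple connectedness of $N$ and arcwise connectedness of $M$ to force the cover to be one-sheeted. There is nothing to compare against in the paper itself, but your write-up would serve as a faithful expansion of the cited proof.
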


Next, we state another classical result which will be used in our arguments and whose proof can be found, for instance, for a more general class of problems, in \cite{DG}.

\begin{proposition}\label{Prodi}
Let $m\in L^{\infty}(\Omega)$, $m(x)>0$ in a set of positive measure. Then, problem
\begin{equation}
\left \{ \begin{array}{ll}
-div(A(x)\nabla u) =\lambda m(x)u & \mbox{in $\Omega$,}\\
u=0 & \mbox{on $\partial\Omega$,}
\end{array}\right.
\end{equation}
where $A\in L^{\infty}(\Omega)$ and $A(x)\geq \mathfrak{m}$ for some positive constant $\mathfrak{m}$, has a smallest positive eigenvalue $\lambda_{1}(m)$ which is simple and corresponding eigenfunctions do not change sign in $\Omega$.
\end{proposition}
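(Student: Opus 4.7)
The plan is to realize $\lambda_1(m)$ as the minimum of a constrained Rayleigh quotient. Define
$$\lambda_1(m) := \inf\left\{\int_\Omega A(x)|\nabla u|^2\,dx : u\in H_0^1(\Omega),\ \int_\Omega m(x) u^2\,dx = 1\right\}.$$
Since $m>0$ on a set of positive measure, the admissible class is non-empty. Combining $A(x)\geq \mathfrak{m}>0$ with Poincar\'e's inequality in $H_0^1(\Omega)$ gives a positive lower bound for $\int_\Omega A|\nabla u|^2\,dx$ on this class, so $\lambda_1(m)>0$.

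For a minimizing sequence $\{u_n\}\subset H_0^1(\Omega)$, the lower bound on $A$ yields a uniform $H_0^1$-bound; along a subsequence $u_n\rightharpoonup u$ in $H_0^1(\Omega)$ and, by Rellich--Kondrachov, $u_n\to u$ strongly in $L^2(\Omega)$. The strong $L^2$ convergence preserves $\int_\Omega m u_n^2\,dx=1$ in the limit, and weak lower semicontinuity of the convex functional $v\mapsto \int_\Omega A|\nabla v|^2\,dx$ (valid since $A\geq \mathfrak{m}>0$) yields $\int_\Omega A|\nabla u|^2\,dx \leq \lambda_1(m)$. Hence $u$ achieves the infimum, and a standard Lagrange multiplier argument shows that $u$ is a weak solution of $-\mbox{div}(A\nabla u)=\lambda_1(m)\, m u$.

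Because $|u|\in H_0^1(\Omega)$ and $|\nabla |u||=|\nabla u|$ a.e., the function $|u|$ is also a minimizer, hence a non-negative weak eigenfunction with eigenvalue $\lambda_1(m)$. The weak Harnack inequality for non-negative super-solutions of divergence-form operators with bounded measurable coefficients (De Giorgi--Nash--Moser theory, in the generality used in \cite{DG}) then gives $|u|>0$ in $\Omega$, so every minimizer has constant sign. Minimality of $\lambda_1(m)$ among positive eigenvalues is immediate from the Rayleigh quotient: any eigenpair $(\mu,v)$ with $\mu>0$ forces $\int_\Omega m v^2\,dx>0$, hence $\mu\geq \lambda_1(m)$. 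For simplicity, two linearly independent eigenfunctions at $\lambda_1(m)$ could both be taken positive by the above, and then some nontrivial combination would be an eigenfunction vanishing at an interior point, contradicting the strict positivity. The main obstacle is precisely this strict positivity $|u|>0$: since $A$ is merely in $L^\infty$, one cannot invoke the classical Hopf lemma and must rely on the measurable-coefficient Harnack theory, which is the technical heart of \cite{DG}.
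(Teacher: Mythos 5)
The paper does not prove this proposition at all: it is quoted as a known result, with the proof deferred to de Figueiredo's lecture notes \cite{DG}, so there is nothing in the paper to compare against line by line. Your variational argument is the standard one (and essentially the one in the cited reference): realize $\lambda_{1}(m)$ as the infimum of $\int_{\Omega}A|\nabla u|^{2}dx$ over the constraint $\int_{\Omega}mu^{2}dx=1$, obtain a minimizer by weak compactness, Rellich and weak lower semicontinuity, derive the Euler--Lagrange equation, and get minimality of $\lambda_{1}(m)$ and simplicity from the Rayleigh characterization plus strict positivity of a first eigenfunction. The steps are all correct; the only place where the wording is slightly off is the positivity step: since $m$ changes sign, $|u|$ is \emph{not} a nonnegative supersolution of the homogeneous operator $-\mathrm{div}(A\nabla\cdot)$, because $-\mathrm{div}(A\nabla|u|)=\lambda_{1}(m)\,m\,|u|$ can be negative where $m<0$. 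To apply the weak Harnack inequality you should first rewrite the equation as $-\mathrm{div}(A\nabla|u|)+\lambda_{1}(m)m^{-}|u|=\lambda_{1}(m)m^{+}|u|\geq 0$, so that $|u|$ is a nonnegative supersolution of an operator with bounded nonnegative zero-order coefficient, for which the De Giorgi--Nash--Moser weak Harnack inequality does apply and yields $|u|>0$ in $\Omega$ (together with local H\"older continuity, which you implicitly need to make sense of the pointwise vanishing used in the simplicity argument). With that adjustment the proof is complete and self-contained, which is arguably more than the paper offers for this statement.
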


Throughout this paper $X$ is the Banach space 
$$
X=\{u\in C^{2, \gamma}(\overline{\Omega}): u=0 \ \mbox{on} \ \partial\Omega\}
$$ 
with norm
$$
\|u\|_{X}=\|u\|_{C^{2}(\overline{\Omega})}+\max_{|\beta|=2}[D^{\beta}u]_{\gamma},
$$ 
where $\gamma\in (0, 1)$, $\beta=(\beta_{1}, \ldots, \beta_{N})\in \N^{N}$, $|\beta|=\beta_{1}+\ldots+\beta_{N}$,
$$
\|u\|_{C^{2}(\overline{\Omega})}=\sum_{0\leq |\beta|\leq 2}\|D^{\beta}u\|_{C(\overline{\Omega})} \ \mbox{and} \ [D^{\beta}u]_{\gamma}=\sup_{x, y\in \Omega\atop x\neq y}\frac{|D^{\beta}u(x)-D^{\beta}u(y)|}{|x-y|^{\gamma}}.
$$
Moreover $Y$ will denote the Banach space $C^{0, \gamma}(\overline{\Omega})$ with norm
$$
\|f\|_{Y}=\|f\|_{C(\overline{\Omega})}+[f]_{\gamma},
$$
where $\|f\|_{C(\overline{\Omega})}=\max_{x\in \overline{\Omega}}|f(x)|$.

\medskip

Hereafter same symbol $C$ denotes different positive constants.

\medskip

\section{A nonlocal eigenvalue problem}\label{se:eigen}

In this section we are interested in studying the following nonlocal eigenvalue problem
\begin{equation}\label{E}\tag{$EP$}
\left \{ \begin{array}{ll}
-div\left(\displaystyle\frac{\nabla u}{c+|\nabla u|_{2}^{2}}\right)=\lambda \left\{ -div\left[\displaystyle\frac{\nabla c}{(c+|\nabla u|_{2}^{2})^{2}}\right]\right\} u & \mbox{in $\Omega$,}\\
u=0 & \mbox{on $\partial\Omega$,}
\end{array}\right.
\end{equation}
where $\Omega\subset\R^{N}$ is bounded smooth domain, $\lambda$ is a positive parameter and $c\in C^{2}(\overline{\Omega})$ is a positive (not constant) function. As we will see in the next section, problem \eqref{E} arises naturally when one studies questions of uniqueness to the problem \eqref{P}.

Before to state the main results of this section, we observe that 

\begin{lemma}\label{positive}
The set
$$
\mathcal{A}:=\left\{\alpha>0: -div\left[\frac{\nabla c}{(c+\alpha)^{2}}\right]>0 \ \mbox{in some open $\Omega_{0}\subset \Omega$}\right\}
$$
is not empty if, and only if, there is an open $\hat{\Omega}\subset\R^{N}$ such that
\begin{equation}\label{C}
\Delta c<2\frac{|\nabla c|^{2}}{c} \ \mbox{in} \ \hat{\Omega}.
\end{equation}
\end{lemma}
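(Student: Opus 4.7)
The plan is to compute the divergence explicitly and reduce the question to a pointwise algebraic inequality. A direct expansion gives
$$
-\text{div}\left[\frac{\nabla c}{(c+\alpha)^{2}}\right]=\frac{2|\nabla c|^{2}-(c+\alpha)\Delta c}{(c+\alpha)^{3}},
$$
so, since $c+\alpha>0$, membership of $\alpha$ in $\mathcal{A}$ is equivalent to the existence of an open set on which $(c+\alpha)\Delta c<2|\nabla c|^{2}$. The proof then splits naturally into the two implications.

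For the ($\Leftarrow$) direction, I would assume $\Delta c<2|\nabla c|^{2}/c$ on some open $\hat{\Omega}$, i.e., $2|\nabla c|^{2}-c\Delta c>0$ there. Picking an open set $\Omega_{0}$ with closure contained in $\hat{\Omega}$, the quantity $2|\nabla c|^{2}-c\Delta c$ is bounded below on $\overline{\Omega_{0}}$ by some $\varepsilon>0$. Then choosing $\alpha>0$ so small that $\alpha\|\Delta c\|_{C(\overline{\Omega_{0}})}<\varepsilon$ yields
$$
2|\nabla c|^{2}-(c+\alpha)\Delta c=(2|\nabla c|^{2}-c\Delta c)-\alpha\Delta c>\varepsilon-\alpha|\Delta c|>0
$$
on $\Omega_{0}$, placing this $\alpha$ in $\mathcal{A}$.

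For the ($\Rightarrow$) direction, I start from $\alpha\in\mathcal{A}$, so $2|\nabla c|^{2}-(c+\alpha)\Delta c>0$ on some open $\Omega_{0}$. First I would rule out the degenerate situation $\nabla c\equiv 0$ on $\Omega_{0}$: on such a set $c$ would be constant, forcing also $\Delta c\equiv 0$, and the inequality would reduce to $0>0$. Hence there is an $x_{0}\in\Omega_{0}$ with $\nabla c(x_{0})\neq 0$. At this point I distinguish the sign of $\Delta c(x_{0})$: if $\Delta c(x_{0})>0$, then $c\Delta c<(c+\alpha)\Delta c<2|\nabla c|^{2}$ at $x_{0}$; if $\Delta c(x_{0})\leq 0$, then $c\Delta c\leq 0<2|\nabla c(x_{0})|^{2}$. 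Either way $c\Delta c<2|\nabla c|^{2}$ holds at $x_{0}$, hence on a whole neighborhood $\hat{\Omega}$ of $x_{0}$ by continuity, giving \eqref{C}.

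The only delicate point is the reverse implication, where one must handle carefully the possibility that $\Delta c$ is positive (so the extra $\alpha\Delta c$ term helps rather than hurts) and exclude the trivial case where $\nabla c$ vanishes identically on the witnessing open set; both issues are resolved by the sign dichotomy above.
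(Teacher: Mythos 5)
Your proof is correct and follows essentially the same route as the paper: expand the divergence to reduce membership in $\mathcal{A}$ to the pointwise inequality $(c+\alpha)\Delta c<2|\nabla c|^{2}$, then compare with the $\alpha=0$ version. The paper dismisses the final equivalence with ``it is clear''; your compactness argument for $(\Leftarrow)$ and sign dichotomy for $(\Rightarrow)$ supply exactly the missing details, and both are sound.
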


\begin{proof}
Differentiating we get
\begin{equation}\label{ineq}
-div\left[\frac{\nabla c}{(c+\alpha)^{2}}\right]=-\frac{1}{(c+\alpha)^{2}}\Delta c+\frac{2}{(c+\alpha)^{3}}|\nabla c|^{2}.
\end{equation}
Now, note that
\begin{equation}\label{div}
-div\left[\frac{\nabla c}{(c+\alpha)^{2}}\right]> 0 \ \mbox{in some open} \ \Omega_{0}
\end{equation}
if, and only if,
\begin{equation}\label{ineq1}
\Delta c< 2\frac{|\nabla c|^{2}}{(c+\alpha)} \ \mbox{in} \ \Omega_{0}.
\end{equation}
It is clear that the existence of a positive  number $\alpha$ satisfying \eqref{ineq1} is equivalent to inequality in \eqref{C}.
\end{proof}

\begin{remark}\label{ok}
In previous Lemma we have shown also that $\mathcal{A}=\emptyset$ if, and only if,  
\begin{equation}\label{ineq4}
\Delta c\geq 2\frac{|\nabla c|^{2}}{c} \ \mbox{in} \ \Omega.
\end{equation}
Certainly, there are many positive functions $c\in C^{2}(\overline{\Omega})$ verifying \eqref{ineq4}. For instance, setting $c=\delta e+1$, where $0<\delta\leq \min\{1/(4|\nabla e|_{\infty}^{2}), 1/(2|e|_{\infty})\}$ and 
\begin{equation}
\left \{ \begin{array}{ll}
\Delta e=1 & \mbox{in $\Omega$,}\\
e=0 & \mbox{on $\partial\Omega$,}
\end{array}\right.
\end{equation}
we conclude that $c>0$ and satisfies \eqref{ineq4}.
\end{remark}

\begin{remark}
An interesting question when \eqref{C} holds is about the topology of set $\mathcal{A}$. In this direction, the proof of Lemma \ref{positive} allows us to say that $\mathcal{A}$ contains ever a neighborhood $(0, \alpha_{0})$.
\end{remark}

Now we are ready to claim the following result.

\begin{theorem}\label{TE}
Suppose that \eqref{C} holds. For each $\alpha\in \mathcal{A}$, problem \eqref{E} has a unique solution $(\lambda_{\alpha}, u_{\alpha})$ such that $\lambda_{\alpha}>0$, $u_{\alpha}>0$ and $|\nabla u_{\alpha}|_{2}^{2}=\alpha$.
%and
%$$
%\lambda_{\alpha}=\displaystyle\inf_{u\in H_{0}^{1}(\Omega)\atop u\in \mathcal{B}}\displaystyle\frac{\displaystyle\int_{\Omega}\displaystyle\frac{|\nabla u|^{2}}{c+\alpha} dx}{\displaystyle\int_{\Omega}u^{2}\left\{-div\left[\displaystyle\frac{\nabla c}{(c+\alpha)^{2}}\right]\right\} dx}.
%$$
\end{theorem}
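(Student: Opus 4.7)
The crucial observation is that the nonlocality in \eqref{E} enters only through the scalar $|\nabla u|_{2}^{2}$, so if we prescribe $|\nabla u|_{2}^{2}=\alpha$, the equation collapses to the \emph{linear} elliptic eigenvalue problem
\begin{equation*}
-\mathrm{div}\!\left(\frac{\nabla u}{c(x)+\alpha}\right)=\lambda\, m_{\alpha}(x)\, u \ \ \text{in $\Omega$}, \quad u=0 \ \ \text{on $\partial\Omega$},
\end{equation*}
where $m_{\alpha}(x):=-\mathrm{div}[\nabla c/(c+\alpha)^{2}]$. My plan is to solve this linear problem via Proposition~\ref{Prodi} and then rescale the principal eigenfunction so that, when substituted back into \eqref{E}, it produces a solution with $|\nabla u|_{2}^{2}=\alpha$; uniqueness will come from the fact that only the first eigenvalue of a self-adjoint elliptic problem admits a positive eigenfunction.

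First I would check that the linear problem above satisfies the hypotheses of Proposition~\ref{Prodi}. Since $c\in C^{2}(\overline{\Omega})$ with $c\geq c_{L}>0$, both $A_{\alpha}:=1/(c+\alpha)$ and $m_{\alpha}$ lie in $L^{\infty}(\Omega)$, and $A_{\alpha}\geq 1/(c_{M}+\alpha)>0$. The assumption $\alpha\in\mathcal{A}$ is precisely that $m_{\alpha}>0$ on some open subset $\Omega_{0}\subset\Omega$, in particular on a set of positive measure. Proposition~\ref{Prodi} then yields a simple smallest positive eigenvalue $\lambda_{1}(\alpha)>0$ with a strictly positive eigenfunction $\phi_{\alpha}$, unique up to positive scaling. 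Setting $\lambda_{\alpha}:=\lambda_{1}(\alpha)$ and
\begin{equation*}
u_{\alpha}:=\frac{\sqrt{\alpha}}{|\nabla\phi_{\alpha}|_{2}}\,\phi_{\alpha},
\end{equation*}
linearity of the reduced problem guarantees that $u_{\alpha}$ still satisfies it, and by construction $|\nabla u_{\alpha}|_{2}^{2}=\alpha$. Plugging back into \eqref{E} shows that $(\lambda_{\alpha},u_{\alpha})$ is a solution with all required properties.

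For uniqueness, let $(\lambda,u)$ be any solution of \eqref{E} with $\lambda>0$, $u>0$ and $|\nabla u|_{2}^{2}=\alpha$. Then $u$ is a strictly positive eigenfunction of the same linear problem with weight $m_{\alpha}$. The step I expect to be the main obstacle is ruling out positive eigenfunctions for eigenvalues other than $\lambda_{1}(\alpha)$, because Proposition~\ref{Prodi} only asserts non-sign-changing for the first eigenfunction. This is settled by the standard orthogonality argument: if $\psi>0$ corresponded to an eigenvalue $\mu\neq\lambda_{1}(\alpha)$, testing one equation against the other eigenfunction and subtracting would give $(\lambda_{1}(\alpha)-\mu)\int_{\Omega}m_{\alpha}\phi_{\alpha}\psi\,dx=0$, which is impossible since the integrand is strictly positive on $\Omega_{0}$. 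Hence $\lambda=\lambda_{1}(\alpha)$, and by simplicity $u=t\phi_{\alpha}$ for some $t>0$; the constraint $|\nabla u|_{2}^{2}=\alpha$ then forces $t=\sqrt{\alpha}/|\nabla\phi_{\alpha}|_{2}$, so $u=u_{\alpha}$ and the proof is complete.
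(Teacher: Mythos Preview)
Your strategy coincides with the paper's: freeze $|\nabla u|_{2}^{2}=\alpha$, reduce \eqref{E} to the linear weighted problem \eqref{EP}, apply Proposition~\ref{Prodi}, and then normalize a positive principal eigenfunction so that $|\nabla u_{\alpha}|_{2}^{2}=\alpha$. The paper's proof is in fact terser than yours---it simply picks $u\in V_{\alpha}$ with $u>0$ and the right $L^{2}$-gradient norm and declares the result---so on existence you are doing exactly what the authors do, with more detail.

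The one genuine gap is in your uniqueness step. From $(\lambda_{1}(\alpha)-\mu)\int_{\Omega}m_{\alpha}\phi_{\alpha}\psi\,dx=0$ you infer a contradiction on the grounds that the integrand is strictly positive on $\Omega_{0}$. But $m_{\alpha}$ is in general sign-changing (it is only assumed positive on $\Omega_{0}$), so positivity of $m_{\alpha}\phi_{\alpha}\psi$ on a subdomain does not prevent the integral over all of $\Omega$ from vanishing; indeed, $\int_{\Omega}m_{\alpha}\phi_{\alpha}\psi\,dx=0$ is precisely the weighted orthogonality one expects between eigenfunctions of distinct eigenvalues. The statement you need---that the principal eigenvalue is the \emph{only} positive eigenvalue admitting a positive eigenfunction---is true, but it requires a different argument (Krein--Rutman theory, a Picone-type identity, or the characterization of $\lambda_{1}$ via the maximum principle, all of which underlie the reference \cite{DG} behind Proposition~\ref{Prodi}). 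Once that fact is invoked, simplicity plus the normalization $|\nabla u|_{2}^{2}=\alpha$ finish the proof exactly as you wrote.
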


\begin{proof}
From Lemma \ref{positive}, $\mathcal{A}\neq \emptyset$. Since $c\in C^{2}(\overline{\Omega})$ and $b>0$ in $\overline{\Omega}$, it follows from Proposition \ref{Prodi} that, for each $\alpha\in \mathcal{A}$, the eigenvalue problem
\begin{equation}\label{EP}\tag{$P_{\alpha}$}
\left \{ \begin{array}{ll}
-div\left(\displaystyle\frac{\nabla u}{c+\alpha}\right)=\lambda \left\{ -div\left[\displaystyle\frac{\nabla c}{(c+\alpha)^{2}}\right]\right\} u & \mbox{in $\Omega$,}\\
u=0 & \mbox{on $\partial\Omega$,}
\end{array}\right.
\end{equation}
has a positive smallest eigenvalue $\lambda_{\alpha}$ whose associated eigenspace $V_{\alpha}$ is unidimensional and its eigenfunctions have defined sign. 
%Moreover,
%$$
%\lambda_{\alpha}=\displaystyle\inf_{u\in H_{0}^{1}(\Omega)\atop u\in \mathcal{B}}\displaystyle\frac{\displaystyle\int_{\Omega}\displaystyle\frac{|\nabla u|^{2}}{c+\alpha} dx}{\displaystyle\int_{\Omega}u^{2}\left\{-div\left[\displaystyle\frac{\nabla c}{(c+\alpha)^{2}}\right]\right\} dx}>0.
%$$
Choosing $u\in V_{\alpha}$ such that $u> 0$ and $|\nabla u|_{2}^{2}=\alpha$, the result follows.
\end{proof}

\begin{remark}\label{re}
In particular, if \eqref{C} holds then
\begin{equation}\label{PI}
\displaystyle\int_{\Omega}u_{\alpha}^{2}\left\{-div\left[\displaystyle\frac{\nabla c}{(c+\alpha)^{2}}\right]\right\} dx=\frac{1}{\lambda_{\alpha}}\displaystyle\int_{\Omega}\displaystyle\frac{|\nabla u_{\alpha}|^{2}}{c+\alpha} dx, \ \forall \ \alpha\in \mathcal{A}.
\end{equation}
\end{remark}

\begin{corollary}\label{EE}
Suppose \eqref{C}. For each $\alpha\in \mathcal{A}$, the following inequality holds
$$
\lambda_{\alpha}\geq \displaystyle\frac{\sqrt{\lambda_{1}}(c_{L}+\alpha)^{2}}{2|\nabla c|_{\infty}(c_{M}+\alpha)\displaystyle}, 
$$
where $\lambda_{1}$ is the first eigenvalue of laplacian operator with Dirichlet boundary condition, $c_{L}=\min_{x\in\overline{\Omega}}c(x)$, $c_{M}=\max_{x\in\overline{\Omega}}c(x)$ and $|\nabla c|_{\infty}=\max_{x\in\overline{\Omega}}|\nabla c (x)|$.
\end{corollary}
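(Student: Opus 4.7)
The plan is to start from the integral identity \eqref{PI} supplied by Remark \ref{re}, convert its left-hand side via integration by parts into something that can be bounded pointwise by $|\nabla c|_{\infty}$, and then squeeze the resulting inequality with Cauchy--Schwarz and the Poincar\'e inequality until $|\nabla u_{\alpha}|_{2}^{2}$ cancels.

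Concretely, first I would write
$$
\int_{\Omega}u_{\alpha}^{2}\left\{-\mathrm{div}\!\left[\frac{\nabla c}{(c+\alpha)^{2}}\right]\right\}dx=\int_{\Omega}\frac{\nabla c\cdot \nabla(u_{\alpha}^{2})}{(c+\alpha)^{2}}dx=\int_{\Omega}\frac{2\,u_{\alpha}\,\nabla c\cdot\nabla u_{\alpha}}{(c+\alpha)^{2}}dx,
$$
where the boundary term vanishes because $u_{\alpha}=0$ on $\partial\Omega$. Substituting this into \eqref{PI} gives the working identity
$$
\int_{\Omega}\frac{2\,u_{\alpha}\,\nabla c\cdot\nabla u_{\alpha}}{(c+\alpha)^{2}}dx=\frac{1}{\lambda_{\alpha}}\int_{\Omega}\frac{|\nabla u_{\alpha}|^{2}}{c+\alpha}dx.
$$

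Next, I would bound the left-hand side from above using $|\nabla c|\leq |\nabla c|_{\infty}$ together with $c+\alpha\geq c_{L}+\alpha$, giving an integrand controlled by $2|\nabla c|_{\infty}(c_{L}+\alpha)^{-2}\,u_{\alpha}|\nabla u_{\alpha}|$. Then Cauchy--Schwarz yields $\int_{\Omega}u_{\alpha}|\nabla u_{\alpha}|\,dx\leq |u_{\alpha}|_{2}|\nabla u_{\alpha}|_{2}$, and the Poincar\'e inequality $|u_{\alpha}|_{2}\leq \lambda_{1}^{-1/2}|\nabla u_{\alpha}|_{2}$ converts this into $\lambda_{1}^{-1/2}|\nabla u_{\alpha}|_{2}^{2}$. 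Symmetrically, I would bound the right-hand side from below by using $c+\alpha\leq c_{M}+\alpha$, so that it is at least $\lambda_{\alpha}^{-1}(c_{M}+\alpha)^{-1}|\nabla u_{\alpha}|_{2}^{2}$.

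Combining these two bounds produces
$$
\frac{|\nabla u_{\alpha}|_{2}^{2}}{\lambda_{\alpha}(c_{M}+\alpha)}\leq \frac{2|\nabla c|_{\infty}}{\sqrt{\lambda_{1}}\,(c_{L}+\alpha)^{2}}\,|\nabla u_{\alpha}|_{2}^{2},
$$
and since $u_{\alpha}>0$ forces $|\nabla u_{\alpha}|_{2}^{2}>0$ (indeed it equals $\alpha$), I can divide through and rearrange to obtain exactly the stated lower bound on $\lambda_{\alpha}$. There is no real obstacle here; the only thing to be careful about is that the sign of $\nabla c\cdot \nabla u_{\alpha}$ is a priori unknown, which is why one must absolutely value the integrand before applying the pointwise bounds — the lower estimate on $\lambda_{\alpha}$ is obtained by comparing a possibly signed quantity to the positive quantity on the right of \eqref{PI}.
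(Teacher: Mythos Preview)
Your proposal is correct and follows essentially the same route as the paper: both arguments start from \eqref{PI}, integrate by parts via the Divergence Theorem to produce the $u_{\alpha}\nabla c\cdot\nabla u_{\alpha}$ integral, bound it above pointwise by $|\nabla c|_{\infty}/(c_{L}+\alpha)^{2}$ together with H\"older (Cauchy--Schwarz) and Poincar\'e, and bound the gradient term below using $c+\alpha\leq c_{M}+\alpha$. The only cosmetic difference is that the paper writes $\lambda_{\alpha}$ as the ratio in \eqref{16} and estimates numerator and denominator separately (substituting $|\nabla u_{\alpha}|_{2}^{2}=\alpha$ at once), whereas you keep the identity as two sides and cancel $|\nabla u_{\alpha}|_{2}^{2}$ at the end.
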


\begin{proof}
From Remark \ref{re}, we get
\begin{equation}\label{16}
\lambda_{\alpha}=\displaystyle\frac{\displaystyle\int_{\Omega}\displaystyle\frac{|\nabla u_{\alpha}|^{2}}{c+\alpha} dx}{\displaystyle\int_{\Omega}u_{\alpha}^{2}\left\{-div\left[\displaystyle\frac{\nabla c}{(c+\alpha)^{2}}\right]\right\} dx}.
\end{equation}
Observe that
\begin{equation}\label{17}
\displaystyle\int_{\Omega}\displaystyle\frac{|\nabla u_{\alpha}|^{2}}{c+\alpha} dx\geq \displaystyle\frac{\alpha}{c_{M}+\alpha}.
\end{equation}
Moreover, by using the Divergence Theorem,
$$
\displaystyle\int_{\Omega}u_{\alpha}^{2}\left\{-div\left[\displaystyle\frac{\nabla c}{(c+\alpha)^{2}}\right]\right\} dx=2\displaystyle\int_{\Omega}\frac{u_{\alpha}\nabla u_{\alpha}\nabla c}{(c+\alpha)^{2}} dx
\leq \displaystyle\frac{2|\nabla c|_{\infty}\displaystyle\int_{\Omega}u_{\alpha}|\nabla u_{\alpha}| dx}{(c_{L}+\alpha)^{2}}.
$$
From H\"older and Poincar\'e inequalities, we conclude that
\begin{equation}\label{18}
\displaystyle\int_{\Omega}u_{\alpha}^{2}\left\{-div\left[\displaystyle\frac{\nabla c}{(c+\alpha)^{2}}\right]\right\} dx\leq \displaystyle\frac{2|\nabla c|_{\infty}\alpha}{\sqrt{\lambda_{1}}(c_{L}+\alpha)^{2}}.
\end{equation}
From \eqref{16}, \eqref{17} and \eqref{18} we have
$$
\lambda_{\alpha}\geq \displaystyle\frac{\sqrt{\lambda_{1}}(c_{L}+\alpha)^{2}}{2|\nabla c|_{\infty}(c_{M}+\alpha)\displaystyle},
$$
for all $\alpha\in \mathcal{A}$.
\end{proof}

\section{Uniqueness results}\label{se:trivial}

In order to apply Theorem \ref{t2} we define operator $\Psi:X\to Y$ by
$$
\Psi(u)=\left( a(x)+b(x)\int_{\Omega}|\nabla u|^{2}dx\right)\Delta u.
$$

In the sequel, we will denote $M\left(x,|\nabla u|_{2}^{2}\right)=a(x)+b(x)\int_{\Omega}|\nabla u|^{2}dx$ for short, where $|\nabla u|_{2}^{2}=\int_{\Omega}|\nabla u|^{2}dx$. The proof of main results of this paper will be divided in various propositions.

\medskip

\begin{proposition}\label{proper}
Operator $\Psi:X\to Y$ is proper.
\end{proposition}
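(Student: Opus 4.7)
The plan is to verify properness by the sequential criterion: given a compact $K \subset Y$, take any sequence $\{u_n\} \subset \Psi^{-1}(K)$, set $f_n := \Psi(u_n) \in K$, extract a subsequence along which $f_n \to f$ in $Y$ (by compactness of $K$), and then produce a sub-subsequence of $\{u_n\}$ converging in $X$ to some $u$ with $\Psi(u) = f$. Writing $\alpha_n := |\nabla u_n|_2^2$, the equation $\Psi(u_n) = f_n$ reads
\begin{equation*}
\Delta u_n = \frac{f_n}{M(x,\alpha_n)}, \qquad M(x,\alpha_n) = a(x) + b(x)\alpha_n \geq a_0 + b_0\alpha_n.
\end{equation*}
The argument splits naturally into (i) an a priori bound on $\alpha_n$, (ii) identification of a limit coefficient, and (iii) Schauder convergence in $X$.

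For step (i), the nonlocal term supplies its own damping. Integration by parts (using $u_n = 0$ on $\partial\Omega$) combined with the Poincar\'e inequality gives
\begin{equation*}
\alpha_n = -\int_{\Omega} u_n\, \Delta u_n\, dx \leq \|u_n\|_{L^2}\|\Delta u_n\|_{L^2} \leq C\alpha_n^{1/2}\|\Delta u_n\|_{L^2},
\end{equation*}
so $\alpha_n^{1/2} \leq C\|\Delta u_n\|_{L^2} \leq C'\|f_n\|_{\infty}/(a_0 + b_0\alpha_n)$. Compactness of $K$ in $Y$ makes $\|f_n\|_{\infty}$ bounded, hence $\alpha_n^{1/2}(a_0 + b_0\alpha_n) \leq C''$, which forces $\{\alpha_n\}$ to remain bounded. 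Pass to a subsequence with $\alpha_n \to \alpha_0 \geq 0$.

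For steps (ii)--(iii), since $a, b \in C^{0,\gamma}(\overline{\Omega})$ and $\alpha_n \to \alpha_0$, the coefficients $M(\cdot, \alpha_n)$ converge to $M_0 := a + b\alpha_0$ in $Y$; the uniform lower bound $M(\cdot, \alpha_n) \geq a_0 > 0$ implies that $1/M(\cdot, \alpha_n) \to 1/M_0$ in $Y$, and therefore $\Delta u_n = f_n/M(\cdot, \alpha_n) \to f/M_0$ in $Y$. Let $u \in X$ be the unique solution of $\Delta u = f/M_0$ with zero Dirichlet data; by Schauder estimates for the Dirichlet Laplacian on the smooth domain $\Omega$,
\begin{equation*}
\|u_n - u\|_X \leq C\|\Delta u_n - \Delta u\|_Y \longrightarrow 0,
\end{equation*}
so $u_n \to u$ in $X$. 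Uniform convergence of $\nabla u_n$ then forces $\alpha_0 = |\nabla u|_2^2$, whence $\Psi(u) = M_0\, \Delta u = f$, showing $u \in \Psi^{-1}(K)$ whenever $f \in K$ and completing the compactness argument.

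The main obstacle is the a priori estimate in step (i): once $\alpha_n$ is controlled, the $x$-dependent nonlocal coefficient stabilises and the rest is routine — continuity of division by a strictly positive H\"older function together with Schauder theory. A minor technical point is that the Schauder estimate must be applied in the $C^{2,\gamma}$ norm defining $X$, which is legitimate because $\partial\Omega$ is smooth and functions in $X$ vanish there.
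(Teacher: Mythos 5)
Your proof is correct, and it follows the same overall template as the paper (sequential properness via the rewritten equation $\Delta u_n = f_n/M(\cdot,\alpha_n)$, the lower bound $M\geq a_0$, and global Schauder estimates in $X$), but the route to the key a priori control of the nonlocal term is genuinely different. The paper bounds $\alpha_n=|\nabla u_n|_2^2$ indirectly: it first notes that the right-hand side is bounded in $C(\overline\Omega)$, invokes elliptic regularity to bound $\{u_n\}$ in $C^{1,\gamma}(\overline\Omega)$, and uses the compact embedding into $C^1(\overline\Omega)$ to extract a subsequence along which $\alpha_n$ converges; only then does it bound the right-hand side in the H\"older norm, obtain boundedness in $X$, extract again via the compact embedding $X\hookrightarrow C^2(\overline\Omega)$, and finish with a Schauder estimate on $u_n-u$. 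You instead obtain the bound on $\alpha_n$ directly from the energy identity $\alpha_n=-\int_\Omega u_n\Delta u_n\,dx$ together with Poincar\'e and the pointwise bound $M(x,\alpha_n)\geq a_0+b_0\alpha_n$, then reduce everything to Bolzano--Weierstrass on the scalar sequence $\alpha_n$, continuity of $t\mapsto 1/M(\cdot,t)$ in the Banach algebra $Y$, and a single application of the Schauder estimate. Your version is somewhat leaner (one elliptic estimate instead of two rounds of compact-embedding extraction) and isolates cleanly where the nonlocal structure enters; the paper's version uses only the crude bound $M\geq a_0$ and standard bootstrapping. Both are complete; no gaps.
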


\begin{proof} 
It is sufficient to prove that if $\{h_n\}\subset Y$ is a sequence converging to $h \in Y$ and $\{u_{n}\}\subset X$ is another sequence with $\Psi(u_{n})=-h_{n}$ then $\{u_{n}\}$ has a convergent subsequence in $X$. For this, note that the equality $\Psi(u_{n})=-h_{n}$ is equivalent to
\begin{equation}\label{1}
-\Delta u_{n} = \frac{h_{n}}{M\left(x,|\nabla u_{n}|_{2}^{2}\right)}.
\end{equation}
Observe that $h_{n}/M\left(.,|\nabla u_{n}|_{2}^{2}\right)\in Y$ because $h_{n}\in Y$, $M\left(.,|\nabla u_{n}|_{2}^{2}\right)\in Y$ and $M\left(x,|\nabla u_{n}|_{2}^{2}\right)\geq a_{0}$.

Moreover,
\begin{equation}\label{2}
\left\|\frac{h_n(x)}{M\left(x,|\nabla u_{n}|_{2}^{2}\right)}\right\|_{C(\overline{\Omega})}\leq \|h_n\|_{C(\overline{\Omega})}/a_{0}, \ \forall \ n\in\N.
\end{equation}

\medskip

From $\|h_n\|_{C(\overline{\Omega})}\leq \|h_{n}\|_{Y}$, \eqref{2} and from boundedness of $\{h_{n}\}$ in $Y$ follows that $\left\{h_{n}/M\left(x,|\nabla u_{n}|_{2}^{2}\right)\right\}$ is bounded in $C(\overline{\Omega})$. Thus, the continuous embedding from $C^{1,\gamma}(\overline{\Omega})$ into $C(\overline{\Omega})$ and equality in (\ref{1}) tell us that $\{u_{n}\}$ is bounded in $C^{1,\gamma}(\overline{\Omega})$ (see Theorem 0.5 in \cite{AP}). Finally, by compact embedding from $C^{1,\gamma}(\overline{\Omega})$ into $C^{1}(\overline{\Omega})$, we conclude that there exists $u\in C^{1}(\overline{\Omega})$ such that, passing to a subsequence, 
\begin{equation}
u_{n}\to u \ \mbox{in} \ C^{1}(\overline{\Omega}).
\end{equation}
Last convergence leads to
\begin{equation}
|\nabla u_{n}(x)|^{2}\to |\nabla u(x)|^{2} \ \mbox{uniformly in $x\in \Omega$}.
\end{equation}
Whence
\begin{equation}\label{3}
|\nabla u_{n}|_{2}^{2}\to |\nabla u|_{2}^{2}.
\end{equation}

\medskip

In the follows, we show that 
\begin{equation}\label{4}
  \left\|\frac{h_n}{M\left(., |\nabla u_{n}|_{2}^{2}\right)}\right\|_{Y}\leq C,
\end{equation}
for some positive constant $C$. In fact, since $\{h_{n}\}\subset Y$ and $\left\{M\left(.,|\nabla u_{n}|_{2}^{2}\right)\right\}\subset Y$, with $M(x, t)\geq a_{0}>0$ for all $t\geq 0$, a straightforward calculation shows us that
$$
\left[ \frac{h_n}{M\left(.,|\nabla u_{n}|_{2}^{2}\right)}\right]_{\gamma}\leq  \frac{1}{a_{0}^2} 
  \left( \|h_{n}\|_{C(\overline{\Omega})} \left[M\left(.,|\nabla u_{n}|_{2}^{2}\right)\right]_{\gamma}+ \left\|M\left(., |\nabla u_{n}|_{2}^{2}\right)\right\|_{C(\overline{\Omega})} [h_n]_{\gamma}\right).
$$

From $\|h_n\|_{C(\overline{\Omega})}, [h_n]_{\gamma} \leq C$,
\begin{equation}
  \left[M\left(.,|\nabla u_{n}|_{2}^{2}\right)\right]_{\gamma}\leq [a]_{\gamma}+[b]_{\gamma}|\nabla u_{n}|_{2}^{2}  \leq [a]_\gamma + C[b]_\gamma
\end{equation}
and
\begin{equation}
  \left\|M\left(.,|\nabla u_{n}|_{2}^{2}\right)\right\|_{C(\overline{\Omega})}\leq \|a\|_{C(\overline{\Omega})}+\|b\|_{C(\overline{\Omega})}|\nabla u_{n}|_{2}^{2}  \leq \|a\|_{C(\overline{\Omega})} + C\|b\|_{C(\overline{\Omega})}
\end{equation}
it follows that
\begin{equation*}
 \left[ \frac{h_n}{M\left(.,|\nabla u_{n}|_{2}^{2}\right)}\right]_{\gamma}
  \leq \frac{C}{a_0^2} \left( [a]_\gamma + C [b]_\gamma + \|a\|_{C(\overline{\Omega})} + C \|b\|_{C(\overline{\Omega})}\right)
  = \frac{C}{a_0^2} \|a\|_{Y} + \frac{C^2}{a_0^2} \|b\|_{Y}.
\end{equation*}
Being $\left\{h_{n}/M\left(x,|\nabla u_{n}|_{2}^{2}\right)\right\}$ bounded in $C(\overline{\Omega})$, the last inequality proves the assertion in \eqref{4}.

By \eqref{1}, \eqref{4} and Theorem 0.5 in \cite{AP}, sequence $\{u_{n}\}$ is bounded in $X$. By compact embedding from $X$ in $C^{2}(\overline{\Omega})$, passing to a subsequence, we get
\begin{equation}\label{5}
  u_n \to u \ \mbox{in} \ C^2(\overline{\Omega}).
\end{equation}

By \eqref{5}, passing to the limit in $n\to \infty$ in \eqref{1} we have
\begin{equation}
-\Delta u=\frac{h}{M\left(x,|\nabla u|_{2}^{2}\right)}.
\end{equation}
Last equality and Theorem 0.5 in \cite{AP} allow us to conclude that $u\in X$.

Finally, by linearity of laplacian, we have
\begin{equation}\label{6}
  -\Delta(u_n - u) =
 \frac{h_n}{M\left(x,|\nabla u_{n}|_{2}^{2}\right)}- \frac{h}{M\left(x,|\nabla u|_{2}^{2}\right)}.
\end{equation}
From \eqref{6} and Theorem 0.5 in \cite{AP} we conclude that $u_{n}\to u$ in $X$.
\end{proof}

\medskip

\begin{proposition}\label{Prop}
Let $a, b\in C^{0, \gamma}(\overline{\Omega})$ and $u\in X$.  If
\begin{equation}\label{Int}
\int_{\Omega}\frac{b(x)u\Delta u}{M(x,|\nabla u|_{2}^{2})}dx\neq 1/2
\end{equation}
holds then $\Psi$ is locally invertible in $u$.
\end{proposition}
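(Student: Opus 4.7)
The plan is to apply the Local Inverse Theorem (Theorem \ref{t1}). So I need to check that $\Psi\in C^{1}(X,Y)$ and that the Fr\'echet derivative $\Psi'(u):X\to Y$ is an isomorphism precisely under hypothesis \eqref{Int}.

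First I would verify that $\Psi$ is $C^{1}$. Since $M(x,t)=a(x)+b(x)t$ is polynomial in $t$ and the map $u\mapsto|\nabla u|_{2}^{2}$ is $C^{1}$ from $X$ to $\R$, a direct computation gives
$$
\Psi'(u)v=M(x,|\nabla u|_{2}^{2})\Delta v+2b(x)\Delta u\int_{\Omega}\nabla u\cdot\nabla v\,dx,
$$
and continuity of $u\mapsto\Psi'(u)$ in the operator norm from $X$ to $Y$ follows from $a,b\in Y$, $M\geq a_{0}>0$ and the algebra structure of $Y$.

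Next I would show that $\Psi'(u):X\to Y$ is a bijection under \eqref{Int}. Given $f\in Y$, the equation $\Psi'(u)v=f$ reads
$$
\Delta v=\frac{f-2A\,b(x)\Delta u}{M(x,|\nabla u|_{2}^{2})},\qquad A:=\int_{\Omega}\nabla u\cdot\nabla v\,dx.
$$
The right-hand side is an element of $Y$ (since $M\geq a_{0}$ is H\"older and $\Delta u\in Y$). By standard Schauder theory, the Dirichlet problem $-\Delta w=g$, $w|_{\partial\Omega}=0$, defines an isomorphism $(-\Delta)^{-1}:Y\to X$. Let $v_{0},v_{1}\in X$ be the unique solutions of
$$
\Delta v_{0}=\frac{f}{M(x,|\nabla u|_{2}^{2})},\qquad \Delta v_{1}=-\frac{2b(x)\Delta u}{M(x,|\nabla u|_{2}^{2})},
$$
so that for any scalar $A$, the function $v:=v_{0}+A v_{1}\in X$ satisfies $\Delta v=(f-2Ab(x)\Delta u)/M(x,|\nabla u|_{2}^{2})$. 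It remains to determine $A$ by the compatibility condition $A=\int_{\Omega}\nabla u\cdot\nabla v\,dx$, which becomes the scalar equation
$$
A\Bigl(1-\int_{\Omega}\nabla u\cdot\nabla v_{1}\,dx\Bigr)=\int_{\Omega}\nabla u\cdot\nabla v_{0}\,dx.
$$
Integrating by parts (using $v_{1}=0$ on $\partial\Omega$) I compute
$$
\int_{\Omega}\nabla u\cdot\nabla v_{1}\,dx=-\int_{\Omega}u\,\Delta v_{1}\,dx=\int_{\Omega}\frac{2b(x)u\Delta u}{M(x,|\nabla u|_{2}^{2})}\,dx,
$$
so the coefficient of $A$ is nonzero exactly when \eqref{Int} holds. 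Hence there is a unique $A\in\R$ solving the scalar equation, giving a unique $v\in X$ with $\Psi'(u)v=f$. Conversely, if $\Psi'(u)v=0$, the same decomposition forces $v_{0}=0$ and $A(1-\int\nabla u\cdot\nabla v_{1})=0$, so $A=0$ and $v=0$; thus $\Psi'(u)$ is injective as well.

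Finally, since $\Psi'(u)$ is a continuous bijection between Banach spaces, the Open Mapping Theorem gives that it is an isomorphism; Theorem \ref{t1} then yields that $\Psi$ is locally invertible at $u$. The main subtlety I expect is making sure the integration-by-parts identity produces exactly the constant $1/2$ appearing in \eqref{Int} (the factor $2$ from the chain rule applied to $|\nabla u|_{2}^{2}$ is what converts the condition $\int b u\Delta u/M\neq 1/2$ into the nondegeneracy of the scalar equation for $A$).
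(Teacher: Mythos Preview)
Your proof is correct and follows essentially the same strategy as the paper: both reduce the invertibility of $\Psi'(u)$ to a single scalar equation coming from the rank-one structure of the perturbation $v\mapsto 2b(x)\Delta u\int_{\Omega}\nabla u\cdot\nabla v\,dx$, and both identify the nondegeneracy condition as $\int_{\Omega}\frac{2b\,u\Delta u}{M}\,dx\neq 1$. The only cosmetic difference is that the paper phrases the reduction as a fixed-point problem for $w=\Delta v$ in $Y$, whereas you decompose $v=v_{0}+Av_{1}$ directly in $X$; the resulting scalar equation and the integration-by-parts step are identical.
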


\begin{proof} 
We are interested in using Theorem \ref{t1} to prove this Lemma. It is standard to show that $\Psi\in C^{1}(X, Y)$ and
$$
\Psi'(u)v=2b(x)\Delta u\int_{\Omega}\nabla u\nabla vdx+M(x,|\nabla u|_{2}^{2})\Delta v.
$$
Remain us proving that $\Psi'(u): X\to Y$ is an isomorphism. It is clear that if $u=0$ there is nothing to prove. Now, if $u\neq 0$, observes that $\Psi'(u)$ is an isomorphism if, and only if, for each $g\in Y$ given, there is a unique $v\in X$ such that $\Psi'(u)v=-g$, this is
\begin{equation}\label{7}
-M(x,|\nabla u|_{2}^{2})\Delta v=g(x)+2b(x)\Delta u\int_{\Omega}\nabla u\nabla vdx.
\end{equation}
From Divergence Theorem, equation in (\ref{7}) is equivalent to
\begin{equation}\label{8}
-M(x,|\nabla u|_{2}^{2})\Delta v=g(x)-2b(x)\Delta u\int_{\Omega}u\Delta vdx.
\end{equation}
Consequently, $\Psi'(u)$ is an isomorphism if, and only if, for each $g\in Y$ given, there is a unique $v\in X$ such that
\begin{equation}\label{9}
\Delta v=\frac{2b(x)\Delta u\int_{\Omega}u\Delta vdx}{M(x,|\nabla u|_{2}^{2})}-\frac{g(x)}{M(x,|\nabla u|_{2}^{2})}.
\end{equation}

\medskip

To study equation (\ref{9}) we define the mapping $T: Y\to Y$ by
\begin{equation}\label{10}
T(w)=\frac{2b(x)\Delta u\int_{\Omega}u w dx}{M(x,|\nabla u|_{2}^{2})}-\frac{g(x)}{M(x,|\nabla u|_{2}^{2})}
\end{equation}
and we note that, since for each $w\in Y$ problem
\begin{equation}\label{LP}\tag{LP}
\left \{ \begin{array}{ll}
\Delta z=w(x) & \mbox{in $\Omega$,}\\
z=0 & \mbox{on $\partial\Omega$,}
\end{array}\right.
\end{equation}
has a unique solution $z\in X$, looking for solutions of (\ref{9}) is equivalent to find fixed points of $T$. Denoting $t=\int_{\Omega}u w dx$, it follows that $w$ is a fixed point of $T$ if, and only if,
\begin{equation}\label{11}
w=T(w)=t\frac{2b(x)\Delta u}{M(x,|\nabla u|_{2}^{2})}-\frac{g(x)}{M(x,|\nabla u|_{2}^{2})}.
\end{equation}

\medskip

%When $g\in Y$ is such that
%$$
%\int_{\Omega}\frac{g(x)u}{M(x, |\nabla u|_{2}^{2})}dx=0,
%$$
%follows from (\ref{11}) that $w=-g(x)/M(x, |\nabla u|_{2}^{2})$ is the unique fixed point of $T$, because in this case, by \eqref{Int}, $t=0$.
%
%\medskip
%
%On the other hand, if
%\begin{equation}\label{12}
%\int_{\Omega}\frac{g(x)u}{M(x, |\nabla u|_{2}^{2})}dx\neq 0,
%\end{equation}
%and $w$ is a fixed point of $T$, then $t\neq 0$. In fact, supposing that $t=0$ and $w$ is a fixed point of $T$, we can multiply (\ref{11}) by $u$ and integrating to get
%$$
%0=t=\int_{\Omega}u w dx=\int_{\Omega}\frac{g(x)u}{M(x, |\nabla u|_{2}^{2})}dx.
%$$
%Last equality contradict (\ref{12}).

\medskip

Therefore $w$ is a fixed point of $T$ if, and only if,
$$
T\left( t\frac{2b(x)\Delta u}{M(x,|\nabla u|_{2}^{2})}-\frac{g(x)}{M(x,|\nabla u|_{2}^{2})}\right)=t\frac{2b(x)\Delta u}{M(x,|\nabla u|_{2}^{2})}-\frac{g(x)}{M(x,|\nabla u|_{2}^{2})}.
$$
From (\ref{10}), we get
$$
\frac{2b(x)\Delta u}{M(x,|\nabla u|_{2}^{2})}\int_{\Omega}u\left[ t\frac{2b(x)\Delta u}{M(x,|\nabla u|_{2}^{2})}-\frac{g(x)}{M(x,|\nabla u|_{2}^{2})}\right]dx=t\frac{2b(x)\Delta u}{M(x,|\nabla u|_{2}^{2})}.
$$
Since $b>0$ and $\Delta u\not\equiv 0$ (because $u\neq 0$), $T$ admits a fixed point if, and only if,
$$
2\int_{\Omega}u\left[ t\frac{2b(x)\Delta u}{M(x,|\nabla u|_{2}^{2})}-\frac{g(x)}{M(x,|\nabla u|_{2}^{2})}\right]dx=t,
$$
namely,
\begin{equation}\label{13}
t\left[ \int_{\Omega}\frac{2b(x)u\Delta u}{M(x,|\nabla u|_{2}^{2})}dx-1\right]=2\int_{\Omega}\frac{g(x)u}{M(x, |\nabla u|_{2}^{2})}dx.
\end{equation}

\medskip

Equality (\ref{13}) say us that if \eqref{Int} occurs then $T$ has a unique fixed point $w$ given by
$$
w=t\frac{2b(x)\Delta u}{M(x,|\nabla u|_{2}^{2})}-\frac{g(x)}{M(x,|\nabla u|_{2}^{2})},
$$
with
$$
t=2\int_{\Omega}\frac{g(x)u}{M(x, |\nabla u|_{2}^{2})}dx/\left[ \int_{\Omega}\frac{2b(x)u\Delta u}{M(x,|\nabla u|_{2}^{2})}dx-1\right].
$$
\end{proof}

\medskip

\begin{remark}
Equality \eqref{13} shows us that $\Psi'(u):X\to Y$ is not surjective if 
$$
\int_{\Omega}\frac{b(x)u\Delta u}{M(x,|\nabla u|_{2}^{2})}dx= 1/2.
$$
In fact, in this case, functions $g\in Y$ such that
$$
\int_{\Omega}\frac{g(x)u}{M(x, |\nabla u|_{2}^{2})}dx\neq 0
$$
are not in the range of $\Psi'(u)$.
\end{remark}

Actually, it is possible to get the same result of (existence and) uniqueness provided in \cite{FMSS} for signed functions as a consequence of Global Inverse Theorem and previous Proposition. This is exactly the content of next corollary.

\begin{corollary}\label{alt}
For each signed function $h\in Y$ given, problem (\ref{P}) has a unique solution.
\end{corollary}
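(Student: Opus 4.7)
The plan is to apply Theorem \ref{t2} to a suitable restriction of $\Psi$. Without loss of generality assume $h\geq 0$; the case $h\leq 0$ reduces to this one via the symmetry $\Psi(-u)=-\Psi(u)$. Any solution $u$ of $\Psi(u)=-h$ automatically satisfies $M(x,|\nabla u|_{2}^{2})\Delta u=-h\leq 0$, and since $M>0$ this forces $\Delta u\leq 0$ in $\Omega$. I would therefore work on the closed, convex (hence arcwise and simply connected) subsets
$$
\tilde X:=\{u\in X:\Delta u\leq 0 \text{ in } \Omega\},\qquad \tilde Y:=\{f\in Y: f\leq 0 \text{ in } \Omega\},
$$
noting that $\Psi(\tilde X)\subset \tilde Y$ by positivity of $M$.

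Next I would verify that $\Psi|_{\tilde X}:\tilde X\to \tilde Y$ meets the hypotheses of Theorem \ref{t2}. Continuity is inherited from $\Psi\in C^{1}(X,Y)$. Properness follows directly from Proposition \ref{proper}: for compact $K\subset \tilde Y$, the set $(\Psi|_{\tilde X})^{-1}(K)=\Psi^{-1}(K)\cap \tilde X$ is a closed subset of the compact set $\Psi^{-1}(K)$. For local invertibility at $u\in \tilde X$ the maximum principle gives $u\geq 0$, so $bu\Delta u\leq 0$ and hence
$$
\int_{\Omega}\frac{b(x)u\Delta u}{M(x,|\nabla u|_{2}^{2})}dx\leq 0\neq 1/2,
$$
which lets Proposition \ref{Prop} supply open neighborhoods $U\subset X$ of $u$ and $V\subset Y$ of $\Psi(u)$ on which $\Psi:U\to V$ is a bijection. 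The key step is to check that this bijection restricts to the relatively open sets $U\cap \tilde X$ and $V\cap \tilde Y$: the forward inclusion is automatic from $\Psi(\tilde X)\subset \tilde Y$, while for $f\in V\cap \tilde Y$ the unique preimage $w\in U$ satisfies $M(x,|\nabla w|_{2}^{2})\Delta w=f\leq 0$, forcing $\Delta w\leq 0$ and thus $w\in \tilde X$.

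Theorem \ref{t2} then yields a homeomorphism $\Psi|_{\tilde X}:\tilde X\to \tilde Y$, producing a unique $u\in \tilde X$ with $\Psi(u)=-h$; uniqueness in all of $X$ is automatic because every $X$-solution already lies in $\tilde X$ by the sign observation in the first paragraph. I expect the only real source of friction to be the bookkeeping of the subspace topology when transferring local invertibility from $\Psi:X\to Y$ to $\Psi|_{\tilde X}:\tilde X\to \tilde Y$; the rest is a clean packaging of Propositions \ref{proper} and \ref{Prop}.
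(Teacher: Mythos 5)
Your proposal is correct and follows essentially the same route as the paper: restrict $\Psi$ to the cone(s) determined by the sign of $\Delta u$, use Proposition \ref{proper} for properness, the maximum principle plus Proposition \ref{Prop} for local invertibility, and conclude with the Global Inverse Theorem. The only differences are organizational — you treat the two sign cases separately via the oddness of $\Psi$ and work with a single convex cone (the paper works directly with $P_{1}\cup(-P_{1})$ and $P_{2}\cup(-P_{2})$), and you make explicit the check that the local bijection from Proposition \ref{Prop} restricts to the relatively open subsets of $\tilde X$ and $\tilde Y$, a detail the paper leaves implicit.
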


\begin{proof} 
First of all, we define the sets 
$$
P_{1}=\{u\in X: \Delta u\geq 0\}\subset X
$$ 
and 
$$
P_{2}=\{h\in Y: h\geq 0\}\subset Y.
$$ 
Consider $P_{1}\cup (-P_{1})$ and $P_{2}\cup (-P_{2})$ as metric spaces whose metrics are induced from $X$ and $Y$, respectively. 

It is clear that $P_{1}\cup (-P_{1})$ is arcwise connected (because $P_{1}$ and $-P_{1}$ are convex sets and $P_{1}\cap(-P_{1})=\{0\}$) closed in $X$. On the other hand, since $P_{2}\cup (-P_{2})$ is the union of the closed cone of nonnegative functions of $Y$ with the closed cone of nonpositive functions of $Y$, follows that $P_{2}\cup (-P_{2})$ is simply connected. 

From $\Psi(P_{1})\subset P_{2}$ and $\Psi(-P_{1})\subset (-P_{2})$, it follows that $\Psi$ is well defined from $P_{1}\cup (-P_{1})$ to $P_{2}\cup (-P_{2})$.

Moreover, being $\Psi$ proper from $X$ to $Y$ (see Proposition \ref{proper}) and $P_{1}\cup (-P_{1})$ and $P_{2}\cup (-P_{2})$ are closed metric spaces in $X$ and $Y$, respectively, it follows that $\Psi$ is proper from $P_{1}\cup (-P_{1})$ to $P_{2}\cup (-P_{2})$.

Note that if $u\in P_{1}$ (resp. $-P_{1}$) then, as $u$ is (the unique) solution to problem
\begin{equation}
\left \{ \begin{array}{ll}
\Delta u=\Delta u & \mbox{in $\Omega$,}\\
u=0 & \mbox{on $\partial\Omega$.}
\end{array}\right.
\end{equation}
Follows from maximum principle that $u\leq 0$ (resp. $u \geq 0$). Whence, we have
$$
\int_{\Omega}\frac{b(x)u\Delta u}{M(x,|\nabla u|_{2}^{2})}dx\leq 0, \ \forall \ u\in P_{1}\cup (-P_{1}).
$$
Therefore, from Proposition \ref{Prop}, $\Psi:P_{1}\cup (-P_{1})\to P_{2}\cup (-P_{2})$ is locally invertible. The result follows now from Global Inverse Theorem. 
\end{proof}

Next corollary does not ensure uniqueness of solution for problem \eqref{P} when function $h$ given is sign changing, but it tells us that there is a unique solution with ``little variation'' if $h\in Y$ given (signed or not) has ``little variation''.

\begin{corollary}
There are  positive constants $\varepsilon,\delta$ such that for each $h\in Y$ with $\|h\|_{Y}<\varepsilon$, problem \eqref{P} has a unique solution $u$ with $\|u\|_{X}<\delta$.
\end{corollary}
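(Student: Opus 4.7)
The plan is to apply the Local Inverse Theorem (Theorem \ref{t1}) to the map $\Psi:X\to Y$ at the point $u=0$, and then to extract the radii $\delta$ and $\varepsilon$ from the resulting local homeomorphism.

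First I would verify the hypothesis of Proposition \ref{Prop} at $u=0$: the integral
$$
\int_{\Omega}\frac{b(x)u\Delta u}{M(x,|\nabla u|_{2}^{2})}\,dx
$$
vanishes there, and $0\neq 1/2$, so $\Psi$ is locally invertible at $u=0$. Equivalently, one computes $\Psi'(0)v=a(x)\Delta v$, and since $a\in C^{0,\gamma}(\overline{\Omega})$ with $a\geq a_{0}>0$, standard linear elliptic theory shows that $\Psi'(0):X\to Y$ is an isomorphism. Since $\Psi\in C^{1}(X,Y)$ (already noted in the proof of Proposition \ref{Prop}), Theorem \ref{t1} furnishes open neighborhoods $A\ni 0$ in $X$ and $B\ni 0$ in $Y$ such that $\Psi:A\to B$ is a $C^{1}$-diffeomorphism.

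Next, pick $\delta>0$ with $B_{X}(0,\delta)\subset A$. The set $\Psi(B_{X}(0,\delta))$ is open in $Y$, being the image of an open set under the local homeomorphism $\Psi|_{A}$, and it contains $\Psi(0)=0$; hence there exists $\varepsilon>0$ with $B_{Y}(0,\varepsilon)\subset \Psi(B_{X}(0,\delta))$. For any $h\in Y$ with $\|h\|_{Y}<\varepsilon$, the element $-h$ lies in $\Psi(B_{X}(0,\delta))$, so there exists a unique $u\in B_{X}(0,\delta)\subset A$ with $\Psi(u)=-h$; this $u$ is the required solution to \eqref{P}, and uniqueness within $B_{X}(0,\delta)$ is immediate from the injectivity of $\Psi|_{A}$.

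There is no substantive obstacle: the argument is essentially a repackaging of the local inverse theorem, and the only point requiring care is the verification of \eqref{Int} at $u=0$, which is trivial. A more pedestrian alternative would be a direct contraction mapping argument for the fixed point equation $\Delta u=-h/M(x,|\nabla u|_{2}^{2})$ on a small ball of $X$, but invoking Theorem \ref{t1} keeps the proof minimal and places the corollary cleanly within the framework already built for Theorems \ref{Main1} and \ref{Main2}.
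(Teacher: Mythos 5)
Your proposal is correct and follows essentially the same route as the paper: the paper's proof is the one-line observation that the integral in Proposition \ref{Prop} vanishes at $u=0$, so that $\Psi$ is locally invertible there, with the $\varepsilon$--$\delta$ statement then read off from the Local Inverse Theorem exactly as you do. Your additional details (the computation $\Psi'(0)v=a(x)\Delta v$ and the explicit extraction of the radii) merely make explicit what the paper leaves implicit.
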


\begin{proof} 
It is sufficient to note that when $u=0$ the integral in previous proposition is null.
\end{proof}

%Next corollary seems indicate that, when $h$ is a sign changing function without restrictions in the norm $\|.\|_{Y}$, the variation of quotient $a/b$ plays also some role in the uniqueness of problem \eqref{P}. 
We are now ready to prove Theorem \ref{Main1}.

\medskip

{\sl Proof  of Theorem \ref{Main1}.}

\medskip
Since $X$ and $Y$ are Banach spaces then $X$ is arcwise connected and $Y$ is simply connected. Moreover, from Proposition \ref{proper}, operator $\Psi$ is proper and  by Divergence Theorem
$$
\int_{\Omega}\frac{b(x)u\Delta u}{M(x,|\nabla u|_{2}^{2})}dx=\frac{1}{\theta+|\nabla u|_{2}^{2}}\int_{\Omega}u\Delta u dx=-\frac{|\nabla u|_{2}^{2}}{\theta+|\nabla u|_{2}^{2}}<0, \ \forall \ u\in X.
$$
The result follows directly from Proposition \ref{Prop} and Global Inverse Theorem.
$\square$

%In order to study the behavior of integral in (\ref{Int}) we need more regularity of functions $a$ and $b$. The next lemma give us some sufficient conditions on $a$ and $b$ for (\ref{Int}) to happen.
\medskip

Next proposition provides us a sufficient condition on functions $a$ and $b$ for that \eqref{Int} occurs when $a/b$ is not constant.

\begin{proposition}\label{Eureka}
Let $a, b\in C^{2, \gamma}(\overline{\Omega})$ and $c=a/b$. 
\begin{itemize}
\item[$(i)$] If $\Delta c\geq 2|\nabla c|^{2}/c \ \mbox{in} \ \Omega$, then  
\begin{equation}
\int_{\Omega}\frac{b(x)u\Delta u}{M(x,|\nabla u|_{2}^{2})}dx\leq 0, \ \forall \ u\in X.
\end{equation}

\item[$(ii)$] If $\Delta c< 2|\nabla c|^{2}/c \ \mbox{in some open} \ \Omega_{0}\subset \Omega$ then
\begin{equation}\label{20}
\int_{\Omega}\frac{b(x)u\Delta u}{M(x,|\nabla u|_{2}^{2})}dx<1/2, \ \forall \ u\in X,
\end{equation}
provided that
$$
\frac{|\nabla c|_{\infty}c_{M}}{\sqrt{\lambda_{1}}c_{L}^{2}}\leq 3/2.
$$
\end{itemize}
\end{proposition}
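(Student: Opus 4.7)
The plan rests on the factorization $M(x,|\nabla u|_2^2)=b(x)\bigl(c(x)+\alpha\bigr)$ with $\alpha:=|\nabla u|_2^2$, which reduces the integrand in question to $u\Delta u/(c+\alpha)$. Two applications of the divergence theorem, both exploiting $u=0$ on $\partial\Omega$, give the identities
\begin{equation*}
\int_{\Omega}\frac{u\Delta u}{c+\alpha}\,dx=-\int_{\Omega}\frac{|\nabla u|^2}{c+\alpha}\,dx+\int_{\Omega}\frac{u\,\nabla u\cdot\nabla c}{(c+\alpha)^2}\,dx
\end{equation*}
(obtained from $div(u\nabla u/(c+\alpha))$) and
\begin{equation*}
2\int_{\Omega}\frac{u\,\nabla u\cdot\nabla c}{(c+\alpha)^2}\,dx=\int_{\Omega}u^2\left[-div\!\left(\frac{\nabla c}{(c+\alpha)^2}\right)\right]dx
\end{equation*}
(obtained from $div(u^2\nabla c/(c+\alpha)^2)$), so the quantity to be controlled is expressed entirely in terms of the weight that already appeared in the eigenvalue problem \eqref{E}.

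For item $(i)$ I would reuse the pointwise computation already performed in the proof of Lemma \ref{positive} (formula \eqref{ineq}): the hypothesis $\Delta c\geq 2|\nabla c|^2/c$, together with $c+\alpha\geq c$, forces $-div(\nabla c/(c+\alpha)^2)\leq 0$ in $\Omega$; hence both terms on the right of the first identity are nonpositive and $(i)$ follows.

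For item $(ii)$ I would discard the nonpositive gradient term and estimate the remainder by the Cauchy--Schwarz and Poincar\'e inequalities ($\|u\|_2\leq|\nabla u|_2/\sqrt{\lambda_1}$):
\begin{equation*}
\int_{\Omega}\frac{u\Delta u}{c+\alpha}\,dx\leq\int_{\Omega}\frac{|u|\,|\nabla u|\,|\nabla c|}{(c+\alpha)^2}\,dx\leq\frac{|\nabla c|_{\infty}\,\alpha}{\sqrt{\lambda_1}\,(c_L+\alpha)^2}.
\end{equation*}
A one-variable AM--GM optimization of the right-hand side in $\alpha\geq 0$ (the maximum is attained at $\alpha=c_L$) removes the $u$-dependence and produces the bound $|\nabla c|_{\infty}/(4\sqrt{\lambda_1}\,c_L)$. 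The stated hypothesis $|\nabla c|_{\infty}c_M/(\sqrt{\lambda_1}\,c_L^2)\leq 3/2$, combined with $c_L\leq c_M$, then yields the chain $|\nabla c|_{\infty}/(4\sqrt{\lambda_1}\,c_L)\leq 3c_L/(8c_M)\leq 3/8<1/2$, which is \eqref{20}.

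The main obstacle lies in $(ii)$: one must both (a) recognize that the crude bound obtained by throwing away $-\int|\nabla u|^2/(c+\alpha)\,dx$ is already sufficient (retaining it would permit a finer one-variable analysis but apparently is not needed here), and (b) sequence Cauchy--Schwarz, Poincar\'e and AM--GM so that the final constant matches the form of the stated hypothesis with room to spare below $1/2$. Item $(i)$ is then essentially a bookkeeping consequence of the integration-by-parts identity in the first paragraph.
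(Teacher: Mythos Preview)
Your derivation of the key integration-by-parts identity \eqref{14} and your treatment of item $(i)$ coincide with the paper's. For item $(ii)$, however, you take a genuinely different and more elementary route. The paper retains the negative term $-\int_\Omega|\nabla u|^2/(c+\alpha)\,dx$ and invokes the nonlocal eigenvalue machinery of Section~\ref{se:eigen}: the variational characterization of $\lambda_\alpha$ (Theorem~\ref{TE}, Remark~\ref{re}) bounds the weighted $L^2$ term by $(1/\lambda_\alpha)\int_\Omega|\nabla u|^2/(c+\alpha)\,dx$, and then the lower bound on $\lambda_\alpha$ from Corollary~\ref{EE} yields \eqref{19}, namely $\int_\Omega b u\Delta u/M\,dx<g(\alpha):=|\nabla c|_\infty(c_M+\alpha)/(\sqrt{\lambda_1}(c_L+\alpha)^2)-1$; monotonicity of $g$ and the hypothesis give $g(\alpha)\le g(0)\le 1/2$. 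You instead discard the negative term outright, apply Cauchy--Schwarz and Poincar\'e directly to the cross term $\int u\nabla u\cdot\nabla c/(c+\alpha)^2$, and optimize in $\alpha$ via AM--GM, arriving at the cleaner uniform bound $|\nabla c|_\infty/(4\sqrt{\lambda_1}c_L)\le 3c_L/(8c_M)\le 3/8<1/2$. Your argument is shorter, bypasses Section~\ref{se:eigen} entirely, and even produces a sharper numerical bound ($3/8$ versus $1/2$). What the paper's approach buys is a structural explanation: it shows that the obstruction to \eqref{Int} is governed by the principal eigenvalue of the auxiliary problem \eqref{E}, which is conceptually informative even if not strictly needed for the proof of this proposition.
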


\begin{proof} 
Putting $b$ in evidence in the integral (\ref{Int}), we get
$$
\int_{\Omega}\frac{b(x)u\Delta u}{M(x,|\nabla u|_{2}^{2})}dx=\int_{\Omega}\frac{u\Delta u}{c+|\nabla u|_{2}^{2}}dx,
$$
where $c=c(x)=a(x)/b(x)$. From Divergence Theorem, we have
$$
\int_{\Omega}\frac{u\Delta u}{c+|\nabla u|_{2}^{2}}dx=-\int_{\Omega}\nabla\left(\frac{u}{c+|\nabla u|_{2}^{2}}\right)\nabla u dx.
$$
Since 
$$
\nabla\left(\frac{u}{c+|\nabla u|_{2}^{2}}\right)=\frac{1}{c+|\nabla u|_{2}^{2}}\nabla u- \frac{u}{(c+|\nabla u|_{2}^{2})^{2}}\nabla c,
$$
we conclude that
\begin{eqnarray*}
\int_{\Omega}\frac{b(x)u\Delta u}{M(x,|\nabla u|_{2}^{2})}dx&=&-\int_{\Omega}\frac{|\nabla u|^{2}}{c+|\nabla u|_{2}^{2}}dx+
\int_{\Omega}\frac{u\nabla u\nabla c}{(c+|\nabla u|_{2}^{2})^{2}}dx\\
&=& -\int_{\Omega}\frac{|\nabla u|^{2}}{c+|\nabla u|_{2}^{2}}dx+\frac{1}{2}
\int_{\Omega}\frac{\nabla(u^{2})\nabla c}{(c+|\nabla u|_{2}^{2})^{2}}dx.
\end{eqnarray*}
Using again the Divergence Theorem
\begin{equation}\label{14}
\int_{\Omega}\frac{b(x)u\Delta u}{M(x,|\nabla u|_{2}^{2})}dx=-\int_{\Omega}\frac{|\nabla u|^{2}}{c+|\nabla u|_{2}^{2}}dx+\frac{1}{2}
\int_{\Omega}u^{2}\left\{-div\left[\frac{\nabla c}{(c+|\nabla u|_{2}^{2})^{2}}\right]\right\}dx.
\end{equation}

(i) At this case, from Lemma \ref{positive} (see also Remark \ref{ok}), $\mathcal{A}=\emptyset$ and, consequently, for each $u\in X$ we have
$$
\int_{\Omega}u^{2}\left\{-div\left[\frac{\nabla c}{(c+|\nabla u|_{2}^{2})^{2}}\right]\right\}dx\leq 0.
$$
Whence, by \eqref{14},
$$
\int_{\Omega}\frac{b(x)u\Delta u}{M(x,|\nabla u|_{2}^{2})}dx\leq 0, \ \forall \ u\in X.
$$

(ii) In this case $\mathcal{A}\neq \emptyset$. If $u\in X$ is such that $|\nabla u|_{2}^{2}\not\in\mathcal{A}$ we saw already that
$$
\int_{\Omega}\frac{b(x)u\Delta u}{M(x,|\nabla u|_{2}^{2})}dx\leq 0.
$$
Now, if $u\in X$ is such that $|\nabla u|_{2}^{2}\in\mathcal{A}$ then, from \eqref{14} and Proposition \ref{TE} (see also Remark \ref{re}), we obtain
\begin{equation}\label{15}
\int_{\Omega}\frac{b(x)u\Delta u}{M(x,|\nabla u|_{2}^{2})}dx\leq \left(\frac{1}{2\lambda_{\alpha}}-1\right)\int_{\Omega}\frac{|\nabla u|^{2}}{c+\alpha} dx,
\end{equation}
where $\alpha:=|\nabla u|_{2}^{2}$. If $\alpha$ is such that $1/2\leq \lambda_{\alpha}$ then, by \eqref{15}, $\int_{\Omega}b(x)u\Delta u/M(x,|\nabla u|_{2}^{2}) dx\leq 0$. Finally, if $0<\lambda_{\alpha}<1/2$ follows from $\alpha=|\nabla u|_{2}^{2}$ and from Corollary \ref{EE} that,
\begin{equation}\label{19}
\int_{\Omega}\frac{b(x)u\Delta u}{M(x,|\nabla u|_{2}^{2})}dx<\frac{|\nabla c|_{\infty}(c_{M}+\alpha)}{\sqrt{\lambda_{1}}(c_{L}+\alpha)^{2}}-1=:g(\alpha).
\end{equation}
We have that $g(0)=|\nabla c|_{\infty}c_{M}/\sqrt{\lambda_{1}}c_{L}^{2}-1$ and 
$$
g'(\alpha)=\frac{|\nabla c|_{\infty}(c_{L}-2c_{M}-\alpha)}{\sqrt{\lambda_{1}}(c_{L}+\alpha)^{3}}<0, \ \forall \alpha>0.
$$ 
Therefore $g$ is decreasing and, from \eqref{19}, we conclude that if 
$$
\frac{|\nabla c|_{\infty}c_{M}}{\sqrt{\lambda_{1}}c_{L}^{2}}\leq\frac{3}{2}
$$
then \eqref{20} holds.
\end{proof}

Bellow we give the proof of our main uniqueness result to problem \eqref{P} which covers sign changing functions.

{\sl Proof of Theorem \ref{Main2}.}
It follows directly from Proposition \ref{proper}, Proposition \ref{Prop}, Proposition \ref{Eureka} and Global Inverse Theorem.
$\square$

Theorems \ref{Main1} and \ref{Main2} seem to indicate that in the case that $h$ is sign changing the uniqueness of solution to the problem \eqref{P} is, in some way, related with the variation of $a/b$. In any way, remains open the question to know what happens with the number of solutions of \eqref{P} in the case that $h$ is sign changing, $\Delta c< 2|\nabla c|^{2}/c \ \mbox{in some open} \ \Omega_{0}\subset \Omega$ and $|\nabla c|_{\infty}c_{M}/\sqrt{\lambda_{1}}c_{L}^{2}$ is large.

\end{document}